\def \1{\mathds{1}}
\def \a{{\mathfrak a}}
\def \al{\alpha}
\def \Asc{\operatorname{Asc}}
\def \be{\beta}
\def \CB{{\cal B}}
\def \CC{{\cal C}}
\def \CF{{\cal F}}
\def \CG{{\cal G}}
\def \CI{\mathcal I}
\def \CO{{\cal O}}
\def \CR{{\cal R}}
\def \coim{\operatorname{coim}}
\def \coker{\operatorname{coker}}
\def \df{\ \begin{array}{c} _{\rm def}\\ ^{\displaystyle =}\end{array}\ }
\def \e{\emph}
\def \F{{\mathbb F}}
\def \full{\mathrm{full}}
\def \Ga{\Gamma}
\def \im{\operatorname{im}}
\def \Mod{\mathrm{Mod}}
\def \N{{\mathbb N}}
\def \ol{\overline}
\def \ph{\varphi}
\def \Q{{\mathbb Q}}
\def \R{{\mathbb R}}
\def \sm{\smallsetminus}
\def \spec{\operatorname{spec}}
\def \Z{{\mathbb Z}}
\def \({\left(}
\def \){\right)}
\def \={\ =\ }
\newcommand{\tto}
[1]{\stackrel{#1}{\longrightarrow}}
\newtheorem{lemma}{Lemma}[section]
\newtheorem{proposition}[lemma]{Proposition}
\newtheorem{exmples}[lemma]{Examples}
\newenvironment{examples}{\begin{exmples}\nopagebreak\begin{itemize}\nopagebreak\rm}{\end{itemize}\end{exmples}}
\newtheorem{exmple}[lemma]{Example}
\newenvironment{example}[0]{\begin{exmple}\rm}
{\end{exmple}}
\newtheorem{defi}[lemma]{Definition}
\newenvironment{definition}[0]{\begin{defi}\rm}
{\end{defi}}
\begin{document}

\pagestyle{myheadings} \markright{COHOMOLOGY OF CONGRUENCE SCHEMES}

\title{Cohomology of congruence schemes}
\author{Anton Deitmar}
\date{}
\maketitle

{\bf Abstract:}
Modules for sesquiads and congruence schemes are introduced.
It is shown that the corresponding categories are belian and that base change functors establish an ascent datum which allows for a cohomology theory  to be established.

$$ $$

\tableofcontents

\newpage
\section*{Introduction}

In recent years, the undertaking of forming a new 
general paradigm underlying algebra, geometry and number theory in the form of $\F_1$-geometry has attracted many mathematicians, who have shown determination in the consensus of establishing as many different theories as possible
\cites{KOW,Soule,Haran,Toen,CC1,Haran2,Lor1,CC2,CC3,CC4,CC5,Lor2,Lor3,Lor4,Lor5,Lor6,Chu}.
A common core to all the different $\F_1$-theories is formed by monoid schemes \cites{Kato,F1,F1-2,Toen,Vezzani}.
In \cite{belian}, the author has established a general framework for cohomology theories in $\F_1$-geometry and has applied it to sheaf cohomology over monoid schemes.
In \cite{congruence} the author has extended the theory of monoid schemes to a flexible and general setting, called congruence schemes.
In the current paper it is verified that the module sheaves over congruence schemes satisfy the conditions of \cite{belian} so that a meaningful cohomology theory is established.
The main technical point is that in the definition of module homomorphism the extra condition of 'fullness' is required to make arguments of homological algebra work.

In the first section of this paper the notion of modules over a sesquiad is given.
A sesquiad is a multiplicative monoid with a partially defined addition.
In order to avoid pathologies, the addition is required to be tame enough as to allow an embedding into a ring.
So a natural definition for a module over a sesquiad is that of a monoid module with a partial addition that is tame enough to allow for an embedding into a ring module.
A more convenient way to formulate this is to make the ring a part of the structure. This is the philosophy followed in the text.
After verifying first properties and defining tensor products, the crucial condition of fullness is introduced.
For a submodule, fullness means that no additive structure is lost. A homomorphism is called full, if it image is full.
It then is shown that the category of modules and full morphisms is belian in the sense of \cite{belian}.
In the second section, there appear strong morphisms, i.e., morphisms for which the canonical map from the coimage to the image is an isomorphism, as is the case in abelian categories.
Strong morphisms have been central in \cite{belian} and there is an interesting interplay between fullness and strongness.
The third section deals with the concepts of flat and etale maps. These will, among other things, be essential for establishing cohomology theories on the corresponding sites.
In the fourth section finally, it is shown that  the module category of a congruence scheme is belian and that the base change functors establish an ascent datum which allows for a cohomology theory to be established.
First properties of the latter are given.

\section{Modules}
The simplest way to define a sesquiad is to say that a sesquiad is a pair $(A,R)$ consisting of a ring $R$ and a multiplicative submonoid $A\subset R$ containing the zero element and generating the ring $R$.

A homomorphism $\ph:(A,R)\to (A',R')$ of sesquiads is a ring homomorphism $\ph:R\to R'$ such that $\ph(A)\subset A'$.

The ambient ring structure then defines a partially defined addition on $A$. For each $n\in\N$ and each $k\in\Z^n$ we define $D_k$ to be the set of all $a\in A^n$ such that $\sum_{j=1}^nk_ja_j\in A$.
Each $k\in\Z^n$ defines the addition $\Sigma_k:D_k\to A$ given by $\Sigma_k(a)=\sum_{j=1}^nk_ja_j$.
The alternative way of viewing a sesquiad is to consider it as a monoid $A$ together with partially defined additions that come from a ring embedding.
Among all rings defining the addition in this way, there is a universal one, written as $R_A$. The map $A\mapsto (A,R_A)$ establishes the equivalence of these two definitions. 

There is the special case of \e{trivial addition}, when $R_A=\Z[A]/\Z 0_A$, where $0_A$ is the zero element of $A$, i.e., the unique element $0_A$ with $a0_A=0_A$ for every $a\in A$, whose existence is insisted upon.

The category RINGS of rings is a subcategory of the category SES of sesquiads via $R\mapsto (R,R)$.

The category MON of commutative monoids with zero is a subcategory of SES via $A\mapsto (A,\Z[A]/\Z 0_A)$.

\begin{definition}
Given a sesquiad $A$, a \e{module} over $A$ is a pair $(S,M_S)$ of an $R_A$-module $M_S$ and a subset $S\subset M_S$ generating the module and being stable under $A$, i.e., one has $AS\subset S$.
So, in particular, $S$ contains the zero element.

A \e{module homomorphism} $(S,M_S)\to (T,M_T)$ is an $A$-module map $f:S\to T$ which extends to an $R_A$-module homomorphism $M_f:M_S\to M_T$.
As $S$ generates $M_S$, the extension $M_f$ is uniquely determined by $f$.
\end{definition}

\begin{example}
If $\ph:A\to B$ is a sesquiad homomorphism, then $B$ is an $A$-module.
\end{example}

\begin{proposition}\label{lem1.3}
Let $f:S\to T$ be a module homomorphism.
\begin{enumerate}[\rm (a)]
\item $f$ is an isomorphism if and only if $f$ is surjective and $M_f$ is injective.
\item $f$ is a monomorphism iff and only if $M_f$ is injective.
\item $f$ is an epimorphism if and only if it $M_f$ surjective.
\end{enumerate}
A module homomorphism can be mono and epi without being an isomorphism.
\end{proposition}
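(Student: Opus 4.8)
The plan is to run everything through the forgetful assignment $f\mapsto M_f$ together with two representability observations. First, since $S$ generates $M_S$, a morphism is completely determined by $M_f$, so $f\mapsto M_f$ is faithful; this already settles the two ``easy'' implications. Indeed, if $M_f$ is injective and $fg=fh$, then $M_fM_g=M_fM_h$ forces $M_g=M_h$ and hence $g=h$, so $f$ is mono; dually, if $M_f$ is surjective then $gf=hf$ forces $M_g=M_h$, so $f$ is epi. The first observation I would record is that $\operatorname{Hom}((A,R_A),(S,M_S))\cong S$, the bijection sending a morphism to the image $s$ of the generator $1$; thus $\operatorname{Hom}((A,R_A),-)$ is the underlying-generating-set functor and $\operatorname{Hom}((A,R_A),f)=f|_S$. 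The second is that for any $R_A$-module $N$ the pair $(N,N)$ is a module and $\operatorname{Hom}((T,M_T),(N,N))\cong\operatorname{Hom}_{R_A}(M_T,N)$, since the generating set of $(N,N)$ is all of $N$ and so imposes no constraint on the target.

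For the nontrivial implication in (c) I would argue contrapositively. If $M_f$ is not surjective, put $N=\coker M_f$ and let $\pi\colon M_T\to N$ be the projection, viewed via the second observation as a morphism $(T,M_T)\to(N,N)$. Then $\pi$ and the zero morphism agree after precomposition with $f$ (both kill $\im M_f$) but differ since $N\neq 0$, so $f$ is not epi. For the nontrivial implication in (b) I would first apply the first observation: a mono $f$ makes $\operatorname{Hom}((A,R_A),f)=f|_S$ injective, so $f$ is injective on the generating set $S$. It then remains to upgrade injectivity on $S$ to injectivity of $M_f$ on all of $M_S$.

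Part (a) follows by combination. If $f$ is an isomorphism then $\operatorname{Hom}((A,R_A),-)$ carries it to a bijection, so $f|_S\colon S\to T$ is surjective, while $M_gM_f=\Id$ and $M_fM_g=\Id$ for the inverse $g$ show that $M_f$ is injective. Conversely, assume $f$ is surjective and $M_f$ injective. Since $\im M_f\supseteq f(S)=T$ and $T$ generates $M_T$, the map $M_f$ is also surjective, hence an isomorphism of $R_A$-modules; surjectivity of $f$ then guarantees that $M_f^{-1}$ carries $T$ into $S$, so $M_f^{-1}|_T\colon T\to S$ is a morphism inverse to $f$. Finally, the inclusion $(A,R_A)\hookrightarrow(R_A,R_A)$ has $M_f=\Id_{R_A}$, which is both injective and surjective, so this morphism is mono and epi by (b) and (c); yet $f|_S\colon A\hookrightarrow R_A$ need not be surjective, so by the criterion in (a) it is not an isomorphism, which realises the last assertion.

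The main obstacle is precisely the step flagged in (b): passing from injectivity of $f$ on the generators $S$ to injectivity of the extension $M_f$. This is where the additive generation of $M_S$ by $S$ must be exploited, namely that a relation $\sum_i r_iM_f(s_i)=0$ in $M_T$ can only arise from a relation $\sum_i r_is_i=0$ already present in $M_S$, so that no kernel element of $M_f$ is invisible at the level of $S$; concretely, expressing a putative kernel element through elements of $S$, injectivity on $S$ must force it to vanish. There is no dual difficulty for epimorphisms, because morphisms into the full modules $(N,N)$ are unconstrained whereas morphisms out of $(A,R_A)$ see only $S$. This asymmetry is the crux of the proposition and the reason the generating set, rather than merely the ambient module, is what governs monomorphisms.
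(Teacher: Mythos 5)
Your parts (a) and (c), the two easy directions of (b) and (c), and the closing example are correct, and your treatment of (c) via the morphism to $(N,N)$ with $N=\coker M_f$ is actually more complete than the paper's, which dismisses (c) as easy; your proof of (a) coincides with the paper's. The representability observation $\operatorname{Hom}\((A,R_A),(S,M_S)\)\cong S$ is also exactly the device the paper itself uses in (b).

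However, the step you flag as the ``main obstacle'' in (b) is a genuine gap, not a routine verification, and it cannot be closed. From mono-ness you correctly deduce that $f|_S$ is injective, but the asserted upgrade --- that a relation $\sum_i r_iM_f(s_i)=0$ in $M_T$ can only come from a relation $\sum_i r_is_i=0$ already valid in $M_S$ --- is precisely the statement to be proved, and it is false in general. Note that mono-ness of $f$ is \emph{equivalent} to injectivity of $f$ on the generating set $S$: any test morphism into $(S,M_S)$ takes values in $S$, so injectivity on $S$ already gives left cancellation; hence mono-ness carries no more information than what you extracted, and that information is strictly weaker than injectivity of $M_f$. Concretely, over $A=\F_1=(\{0,1\},\Z)$ take $S=\{0,2,3\}\subset\Z=M_S$ and $T=\{0,\bar 2,\bar 3\}\subset\Z/4\Z=M_T$, and let $f$ be the restriction of the reduction map $\Z\to\Z/4\Z$. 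Then $f$ is injective on $S$, hence a monomorphism (indeed it is also full and epi), but $M_f$ is not injective. So no argument of the shape you propose can complete the forward direction of (b). For comparison, the paper's own proof of (b) applies the same representability trick only to elements of $\ker(f)=f^{-1}(0)$ and concludes $\ker(f)=0$, which is weaker still than what you establish; it likewise never bridges from information on $S$ to injectivity of $M_f$. Your gap is therefore shared with (in fact smaller than) the one in the paper's argument, and the example above shows it is a defect of the statement itself rather than something a better proof could repair.
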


\begin{proof}
(a) Suppose $f$ is an isomorphism.
Then $M_f:M_S\to M_T$,  is an isomorphism of $R_A$-modules, hence bijective.

For the converse direction assume $f$ to be surjective on $S\to T$ and  $M_f$ injective.
As $T$ generates $M_T$, the map $M_f$ is surjective as well, so there is an $R_A$-module homomorphism $M_g:M_T\to M_S$ inverting $M_f$. As $M_f$ maps $S$ onto $T$, the inverse map $M_g$ maps $T$ to $S$ and therefore is an inverse of $A$-modules.

(b) Assume $f$ is mono and let $K=\ker(f)$.
We want to show that $K=0$.
Let $k\in K$ and define a module homomorphism $g:(A,R_A)\to (S,M_S)$ given by $g(1)=k$.
Then $fg$ is the zero morphism. As $f$ is mono, it follows $g=0$, hence $k=0$.
The converse direction is trivial.

Part (c) is easy.
Finally, we give an example of a morphism $f$ which is epi and mono without being iso.
Let $S$ be any $A$-module such that $S\ne M_S$ and let $T=M_S$ and $f:S\to T$ the inclusion. Then $f$ is an isomorphism of $R_A$-modules, but not of $A$-modules.
\end{proof} 

\begin{definition}
We define the \e{tensor product} $S\odot T$ of two $A$-modules $S$ and $T$ to be the set of simple tensors $s\otimes t$ in $M_S\otimes_{R_A}M_T$, where $s\in S$ and $t\in T$.
The corresponding $R_A$-module is
$$
M_{S\odot T}=M_S\otimes M_T,
$$
where the tensor product on the right hand side is over the ring $R_A$.
\end{definition}

Note that if $A$ is a ring, it has generally more modules in the sesquiad sense than in the ring sense.
Also,  if $S$ and $T$ are modules in the ring sense,
the sesquiadic tensor product $S\odot T$ will in general differ from the ring theoretic tensor product $S\otimes T$.

\begin{definition}
Let $A$ be a sesquiad and $S,T,U$ be sesquiad modules.
A map $b:S\times T\to U$ is called \e{bilinear}, if for all $s\in S$, $t\in T$ the maps $b(s,.)$ and $b(.,t)$ are module homomorphisms.
\end{definition}

\begin{proposition}
The natural map $b_0:S\times T\to S\odot T$ mapping $(s,t)$ to the simple tensor $s\odot t=s\otimes t$ is bilinear.
Every bilinear map from $S\times T$ factors uniquely over $b_0$.
\end{proposition}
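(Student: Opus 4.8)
The plan is to prove the two assertions separately, since the bilinearity of $b_0$ is routine while the universal property carries the real content. For the first claim, fix $s\in S$ and consider the canonical insertion $M_T\to M_S\otimes_{R_A}M_T$, $n\mapsto s\otimes n$, which is $R_A$-linear. Its restriction to $T$ is precisely $b_0(s,\cdot)$, and since $s\in S$ and $t\in T$ the image lands in the set of simple tensors $S\odot T$. As this restriction is $A$-linear (one has $s\otimes(at)=a(s\otimes t)$ for $a\in A$) and extends to the stated $R_A$-linear map on $M_T$, it is a module homomorphism $T\to S\odot T$. By the symmetric argument $b_0(\cdot,t)$ is a module homomorphism for each $t$, so $b_0$ is bilinear.

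\textbf{Reduction of the universal property.} I would first observe that the datum of a module homomorphism $\beta:S\odot T\to U$ is equivalent to that of its $R_A$-linear extension $M_\beta:M_S\otimes_{R_A}M_T\to M_U$ subject to $M_\beta(S\odot T)\subset U$ (the $A$-linearity of the restriction being automatic since $M_\beta$ is $R_A$-linear and $a(s\otimes t)=(as)\otimes t\in S\odot T$). By the universal property of $\otimes_{R_A}$, such $R_A$-linear maps correspond bijectively to $R_A$-bilinear maps $M_S\times M_T\to M_U$. Because $S$ generates $M_S$ and $T$ generates $M_T$, the simple tensors $s\otimes t$ with $s\in S$, $t\in T$ generate $M_S\otimes M_T$ as an $R_A$-module; hence any $R_A$-linear map out of it is determined by its values on these, which yields uniqueness of the factorization at once. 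The problem therefore reduces to producing an $R_A$-bilinear map $\tilde b:M_S\times M_T\to M_U$ with $\tilde b(s,t)=b(s,t)$ on $S\times T$; composing the induced map on $M_S\otimes M_T$ with $b_0$ and restricting to $S\odot T$ then gives the required $\beta$, with $\beta(a(s\otimes t))=b(as,t)=a\,b(s,t)$ confirming $A$-linearity.

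\textbf{The main obstacle: existence of the extension.} This is the only genuinely delicate step, and it is where the hypothesis of bilinearity must be used in full. For each $t\in T$ the map $b(\cdot,t):S\to U$ is a module homomorphism, so it extends uniquely to an $R_A$-linear map $\psi_t:M_S\to M_U$; symmetrically $b(s,\cdot)$ extends to $\phi_s:M_T\to M_U$. I would form the free $R_A$-modules $F_S=R_A^{(S)}$ and $F_T=R_A^{(T)}$ with the canonical surjections $\pi_S:F_S\to M_S$, $\pi_T:F_T\to M_T$, and let $\hat b:F_S\times F_T\to M_U$ be the $R_A$-bilinear map determined by $\hat b(e_s,e_t)=b(s,t)$. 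The crux is to verify that $\hat b$ vanishes on $(\ker\pi_S)\times F_T$ and on $F_S\times(\ker\pi_T)$: whenever $\sum_i r_i s_i=0$ in $M_S$, one has $\sum_i r_i\,b(s_i,t)=\psi_t\bigl(\sum_i r_i s_i\bigr)=0$ for every $t$ by the $R_A$-linearity of $\psi_t$, and symmetrically on the $T$-side via $\phi_s$. Once this vanishing is established, $\hat b$ descends along $\pi_S\times\pi_T$ to a well-defined $R_A$-bilinear $\tilde b$ on $M_S\times M_T$ extending $b$, and the reduction above completes the proof. I expect this descent across the defining relations of $M_S$ and $M_T$ to be the sole nontrivial point, everything else being the formal transport of the universal property of $\otimes_{R_A}$ through the pairs $(S,M_S)$.
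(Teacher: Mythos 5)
Your proof is correct and takes essentially the same route as the paper: both realize the factorization $\alpha:S\odot T\to U$ as the restriction of the $R_A$-linear map $M_S\otimes_{R_A} M_T\to M_U$ induced by an $R_A$-bilinear extension of $b$ to $M_S\times M_T$, which settles well-definedness, and both get uniqueness from the fact that $S\odot T$ consists of simple tensors. The only difference is one of detail: the paper simply asserts that the bilinear extension of $b$ exists, whereas you actually construct it (via the partial extensions $\psi_t,\phi_s$ and descent from the free presentations $F_S\to M_S$, $F_T\to M_T$), thereby filling in the one step the paper leaves implicit.
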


\begin{proof}
The first assertion is clear.
For the second, let $b:S\times T\to U$ be bilinear.
Define $\al:S\odot T\to U$ by $\al(s\odot t)=b(s,t)$.
We have to show that $\al$ is well-defined.
For this note that $\al$ is the restriction of the  map $M_S\otimes M_T\to M_U$ which is induced by the bilinear extension $M_S\times M_T\to M_U$ of $b$.
So $\al$ is well-defined.
We get $b=\al b_0$ and $\al$ is unique with that property.
\end{proof}

\begin{definition}
Let $T$ be an $A$-module.
For an arbitrary submodule $U$ we define the \e{full closure} to be
$$
\ol U^\full=M_U\cap T.
$$
A submodule $U\subset T$ is called \e{full}, if
$
U=\ol U^\full.
$

Let $\al:S\to T$ be an $A$-module homomorphism.
Then $\al$ is called a \e{full morphism}, if the image $\al(S)$ is a full submodule of $T$, i.e., if
$
\al(S)=\al(M_S)\cap T.
$
\end{definition}

Kernels and cokernels of full morphisms are full.

\begin{proposition}\label{lem1.6}
A module homomorphism $f$ is an isomorphism if and only if $f$ is full and $M_f$ is an isomorphism.
\end{proposition}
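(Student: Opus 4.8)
The plan is to reduce everything to Proposition \ref{lem1.3}(a), which already characterizes the isomorphisms as precisely the homomorphisms that are surjective with $M_f$ injective. The only extra ingredients I will need are the defining properties of a module, namely that $T$ generates $M_T$ and $T\subset M_T$, together with the definition of fullness, $f(S)=M_f(M_S)\cap T$.

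For the forward implication I would assume $f$ is an isomorphism and apply \ref{lem1.3}(a) to conclude that $f$ is surjective and $M_f$ is injective. To upgrade injectivity of $M_f$ to an isomorphism I would argue that $M_f$ is automatically surjective: since $f$ is onto, $T=f(S)\subset M_f(M_S)$, and because $T$ generates $M_T$ while $M_f(M_S)$ is an $R_A$-submodule, this forces $M_f(M_S)=M_T$. Hence $M_f$ is bijective, i.e. an isomorphism. Fullness is then immediate, since $f(S)=T$ and $M_f(M_S)\cap T=M_T\cap T=T$, so the required identity $f(S)=M_f(M_S)\cap T$ holds.

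For the reverse implication I would assume $f$ is full and $M_f$ is an isomorphism, and aim to verify the two conditions of \ref{lem1.3}(a). Injectivity of $M_f$ is part of the hypothesis. For surjectivity of $f$ I would combine fullness with surjectivity of $M_f$: fullness gives $f(S)=M_f(M_S)\cap T$, and $M_f$ onto gives $M_f(M_S)=M_T$, whence $f(S)=M_T\cap T=T$ because $T\subset M_T$. Thus $f$ is surjective with $M_f$ injective, and \ref{lem1.3}(a) yields that $f$ is an isomorphism.

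The argument is essentially a matter of unwinding definitions, so I do not expect a genuine obstacle. The one point that must be handled carefully — and that is the real content — is the passage between surjectivity of the underlying set map and surjectivity of its linear extension: one direction uses that $T$ generates $M_T$ (to promote $f$ onto into $M_f$ onto), and the other uses fullness together with $M_f$ onto (to recover $f$ onto). Keeping straight which of these two mechanisms is in play at each step is where care is needed.
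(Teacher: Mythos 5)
Your proof is correct and follows essentially the same route as the paper: the key step in both is that fullness plus surjectivity of $M_f$ forces $f(S)=M_f(M_S)\cap T=M_T\cap T=T$, and the isomorphism is then obtained by restricting $M_f^{-1}$ to $T$ — which is exactly what your appeal to Proposition \ref{lem1.3}(a) amounts to, since its proof is that same restriction-of-inverse construction. The paper simply dismisses the forward direction as trivial, whereas you spell it out; the content is the same.
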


\begin{proof}
For the non-trivial direction let $f$ be full and $M_f$ be an isomorphism. Then $f$ is surjective and and $M_f^{-1}|_{T}$ is an inverse to $f$.
\end{proof}

\begin{examples}
\item Let $S$ be an $A$-module such that $S\ne M_S$.
Set $T=M_S$, then $S$ is a submodule of $T$, which is not full.
\item If $S$ and $T$ have trivial addition, then every module homomorphism $S\to T$ is full.
\item Let $T$ have non-trivial addition and assume that the $A$-set $T$ equipped with the trivial addition is also an $A$-module.
Then the identity map $T_{triv}\to T$ is a bijection, but not an isomorphism.
\end{examples}

Recall from \cite{belian}, the a \e{belian category} is a balanced pointed category which contains kernels, cokernels and finite products and has the property that every morphism with zero cokernel is epic.
Let $A$ be a sesquiad and let $\Mod(A)$ be the category of $A$-modules and $A$-homomorphisms.
Further let $\Mod_\full(A)$ be the subcategory of full morphisms.

\begin{proposition}\label{prop1.8}
$\Mod_\full(A)$ is a belian category with enough injectives and projectives.
\end{proposition}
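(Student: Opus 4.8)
The plan is to verify the defining properties of a belian category for $\Mod_\full(A)$ one by one, in each case reducing the statement to the corresponding fact about $R_A$-modules and then checking that the fullness condition is respected. The zero object is $(0,0)$, and Propositions~\ref{lem1.3} and~\ref{lem1.6} will do most of the linear-algebra work. For a morphism $f\colon S\to T$ I would take as kernel the module $(S\cap\ker M_f,\ \sp{S\cap\ker M_f})$ with its inclusion, and as cokernel the image of $T$ in $M_T/M_f(M_S)$ equipped with the quotient map; these are exactly the constructions behind the remark that kernels and cokernels of full morphisms are full, and their universal properties follow by restricting the $R_A$-linear factorisations, once one checks that the factoring map is again full.

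The epimorphisms are the clean case and dispose of two requirements at once. A morphism has zero cokernel precisely when $M_f$ is surjective, and then $M_gM_f=M_hM_f$ forces $M_g=M_h$, hence $g=h$; this shows simultaneously that every morphism with zero cokernel is epic and that $f$ epic implies $M_f$ surjective (apply $f$ to the pair $\pi,0\colon T\to\coker f$). For the balanced property I would invoke Proposition~\ref{lem1.6}: as every arrow of $\Mod_\full(A)$ is full, $f$ is an isomorphism as soon as $M_f$ is bijective. If $f$ is epic, $M_f$ is surjective and, by fullness, $f$ maps $S$ onto $T$; so everything comes down to proving that a monomorphism $f$ has $M_f$ injective.

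This implication is the crux, and the place where fullness is genuinely needed. The naive kernel computation only delivers $S\cap\ker M_f=0$, which is strictly weaker than injectivity of $M_f$: because $\Mod_\full(A)$ has fewer arrows than $\Mod(A)$, being monic is a weaker hypothesis, and the test morphism $(A,R_A)\to(S,M_S)$, $1\mapsto k$, used in Proposition~\ref{lem1.3}(b) need not be full. I would instead argue in two stages — first that a monomorphism is injective on the generating set $S$ (a coincidence $f(s)=f(s')$ with $s\ne s'$ is detected by two full morphisms out of the submodule generated by $\{s,s'\}$, in the spirit of the swap automorphism of a product), and then, using the $A$-stability of $S$ together with the surjectivity of $M_f$ supplied by the epi hypothesis, that injectivity on $S$ forces injectivity on all of $M_S$. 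I expect this upgrade to be the main technical obstacle: one must show that the $A$-stable generating set is rigid enough that $\ker M_f$ cannot avoid the differences $S-S$ while remaining non-zero, and this is precisely the kind of control that fullness is designed to provide.

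It remains to produce finite products and enough injectives and projectives. For products the underlying $R_A$-module is $M_S\oplus M_T$; the subtle point — a further instance of the same phenomenon — is that the comparison morphism attached to a pair of full morphisms must itself be full, which is what pins down the correct generating set of the product (it need not be $S\times T$). For projectives I would use the free modules, mapping $A^{(X)}$ onto a module by sending generators to a generating set, and for injectives I would attempt to transport the injectives of $R_A\text{-}\Mod$ (which exist by Baer's criterion) via the functor $J\mapsto(J,J)$, embedding $S$ through a monomorphism $M_S\hookrightarrow J$. In every one of these constructions the one recurring thing to verify — and the expected source of essentially all the difficulty — is that the canonical comparison, lifting and extension maps coming from the $R_A$-module picture satisfy the fullness condition.
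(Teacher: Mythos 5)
Your reduction of balancedness to ``monic implies $M_f$ injective'' is correct, and your instinct that this is the crux is sharper than the paper's own treatment, which disposes of it by citing Proposition~\ref{lem1.3} --- a statement about $\Mod(A)$ whose proof uses the test morphism $(A,R_A)\to(S,M_S)$, $1\mapsto k$, which is indeed not full, exactly as you observe. Your stage (i) also works: a monomorphism in $\Mod_\full(A)$ is injective on $S$, e.g.\ by testing against the two projections out of $P=\{(s,s')\in S\times S: f(s)=f(s')\}$ (with $M_P$ the submodule of $M_S\oplus M_S$ generated by $P$), both of which are surjective, hence full. The genuine gap is stage (ii): the implication ``$f$ full, injective on $S$, and $M_f$ surjective $\Rightarrow M_f$ injective'' is not merely the main obstacle, it is false, so no argument can close it. Take $A=\F_1$, so $R_A=\Z$; let $S=\{0,e_1,e_2\}$ generate $M_S=\Z^2$, let $T=\{0,1,2\}$ generate $M_T=\Z$, and set $f(e_1)=1$, $f(e_2)=2$. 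Then $f$ is surjective onto $T$, hence full and epic; it is injective on $S$, hence monic (a morphism is determined by its restriction to the distinguished set, so injectivity on $S$ makes $f$ left-cancellable against \emph{all} morphisms, full or not); yet $\ker(M_f)=\Z(2e_1-e_2)\neq 0$. Note that $\ker(M_f)$ avoids the differences $S-S$ while being nonzero, which is exactly the rigidity you hoped fullness would provide. And $f$ is not an isomorphism: its set-theoretic inverse sends $1\mapsto e_1$, $2\mapsto e_2$ and admits no $\Z$-linear extension $\Z\to\Z^2$.

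You should also know how this sits relative to the paper. The paper's proof does not contain the step you were trying to supply: it quotes Proposition~\ref{lem1.3}(b), but the proof of that proposition only establishes $f^{-1}(0)=0$ (its test morphism can only probe elements of $S$), and the example above has $f^{-1}(0)=0$ while $\ker(M_f)\neq 0$. So the example is simultaneously a counterexample to your stage (ii), to Proposition~\ref{lem1.3}(b) as stated, and to balancedness of $\Mod_\full(A)$ as the definitions stand; the defect cannot be repaired by a cleverer argument, only by changing the definitions (e.g.\ strengthening what a morphism, or a monomorphism, is). Your remaining cautions are likewise well founded but secondary: the canonical map $W\to S\times T$ induced by a pair of full morphisms need not be full (so the cartesian product, which the paper asserts is the categorical product, does not obviously have the universal property in $\Mod_\full(A)$), and the embedding of $(S,M_S)$ into $(J,J)$ along $M_S\hookrightarrow J$ is full only when $S=M_S$, so producing enough injectives requires a more careful construction than either you or the paper give.
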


\begin{proof}
To see that $\Mod_\full(A)$ is balanced, let $f:S\to T$ be a full morphism which is monic and epic.
By Lemma \ref{lem1.3} the morphism of $R_A$ modules $M_f$ is monic and epic, hence an isomorphism.
By Lemma \ref{lem1.6}, $f$ is an isomorphism.
The category $\Mod_\full(A)$ is pointed by the zero module. For any morphism $f:S\to T$ the module
$(f^{-1}(0),M_{f^{-1}(0)})$ is a kernel.
Further let $T/f(S)$ be the image of $T$ in $M_T/M_{f(S)}$. Then $(T,M_T)\to(T/f(S),M_T/M_{f(S)})$ is a cokernel for $f$.
A categorial product of two modules $S$ and $T$ is given by their cartesian product.
Let finally $f:S\to T$ be a morphism with zero cokernel.
Then $M_T=M_{f(S)}$ and therefore by Lemma \ref{lem1.3}, the morphism $f$ is an epimorphism.

Every module admits a surjection from a free module, free modules are projective, and surjections are full. Therefore there are enough projectives in $\Mod_\full(A)$.
The existence of enough injectives is a consequence of the following Lemma together with the fact that the category of $R_A$-modules has enough injectives.

\begin{lemma}\label{lem1.9}
A module $(I,M_I)$ is injective if and only if $I=M_I$ and $M_I$ is injective as  $R_A$-module.
\end{lemma}

{\it Proof.} Let $(I,M_I)$ be injective. We consider the injection $(I,M_I)\hookrightarrow (M_I,M_I)$.
By injectivity, the identity $(I,M_I)\tto = (I,M_I)$ extends to a map $(M_I,M_I)\to (I,M_I)$, which implies $I=M_I$, as $M_I$ is generated by $I$.
To see that $M_I$ is injective, note that an injection of $R_A$-modules $M\hookrightarrow N$ induces an injection of $(A,R_A)$ modules $(M,M)\hookrightarrow (N,N)$.
The converse direction is trivial.
\end{proof}

\section{Strong morphisms}
Let $A$ be a sesquiad and let $S$ be an $A$-module.
For a submodule $U\subset S$ we define the quotient to be
$$
S/U=\text{ image of } S \text{ in } M_S/M_U.
$$
So $a,b\in S$ are equal in $S/U$ if and only if $a-b\in M_U$.
The universal module for $S/U$ is $M_S/M_U$.

\begin{lemma}\label{alem2.1}
\begin{enumerate}[\rm (a)]
\item For a submodule $U$ of $S$ we have $\ker(S\to S/U)=\ol U^\full$.
\item A submodule $U\subset M$ is full if and only if it is the kernel of some morphism $S\to T$.
\end{enumerate}
\end{lemma}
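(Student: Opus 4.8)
The plan is to reduce both parts to elementary manipulations with the underlying $R_A$-modules, exploiting the explicit description of the quotient $S/U$ and of the full closure.

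First I would treat part (a), which is essentially a matter of unwinding definitions. The canonical morphism $q\colon S\to S/U$ has as underlying $R_A$-map the projection $M_S\to M_S/M_U$, so an element $s\in S$ lies in $\ker q$ precisely when its image in $M_S/M_U$ vanishes, that is, when $s\in M_U$. Hence $\ker(S\to S/U)=M_U\cap S$. Since the ambient module of $U$ is $S$, the full closure is by definition $\ol U^\full=M_U\cap S$, and the two sides agree.

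For part (b) the forward implication then follows formally: if $U$ is full, part (a) gives $U=\ol U^\full=\ker(S\to S/U)$, exhibiting $U$ as the kernel of the quotient morphism. For the converse I would assume $U=\ker(f)=f^{-1}(0)$ for a morphism $f\colon S\to T$, so that $M_U=M_{f^{-1}(0)}$. As $U\subset\ol U^\full$ always holds, it suffices to prove the reverse inclusion $M_U\cap S\subset U$. The point is that $f^{-1}(0)\subset\ker(M_f)$ and that $\ker(M_f)$ is an $R_A$-submodule of $M_S$; therefore the $R_A$-module $M_U$ generated by $f^{-1}(0)$ is contained in $\ker(M_f)$. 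Any $x\in M_U\cap S$ thus satisfies $M_f(x)=0$, and since $M_f$ restricts to $f$ on $S$ we obtain $f(x)=0$, i.e. $x\in f^{-1}(0)=U$. This gives $\ol U^\full=U$, so $U$ is full.

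The only step that is not pure bookkeeping is the inclusion $M_U\subseteq\ker(M_f)$ in the converse direction, and this is the part I would expect to be the crux; it is, however, immediate once one notes that $\ker(M_f)$ is an $R_A$-submodule containing the generators $f^{-1}(0)$ of $M_U$. I do not anticipate any genuine obstacle, since both statements rest on the compatibility of the $A$-module and $R_A$-module structures already recorded in the definitions of quotient and of full closure.
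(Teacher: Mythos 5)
Your proof is correct and follows essentially the same route as the paper: part (a) by unwinding the definition of the quotient $S/U$, and the converse of (b) by using that $f$ is the restriction of the $R_A$-linear map $M_f$, which annihilates $M_U$. The only cosmetic difference is that the paper writes an element of $M_U\cap S$ explicitly as a $\Z$-linear combination $\sum_j k_j s_j$ of kernel elements and applies additivity of $f$ to this defined sum, whereas you package the same fact as the inclusion $M_U\subseteq\ker(M_f)$ before intersecting with $S$; the content is identical.
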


\begin{proof}
(a) An element $s$ of $S$ lies in the kernel of $S\to S/U$ if and only if $s\in M_U$.

(b) If $U$ is full, it is a kernel by (a).
For the converse direction, let $U$ be the kernel of $f:S\to T$.
Let $s\in \ol U^\full=M_U\cap S$.
Then $s=\sum_{j=1}^nk_js_j$ for some $k_j\in\Z$ and $s_j\in \ker(f)$.
This means that the sum is defined in $S$ and therefore $f(s)=f\(\sum_{j=1}^nk_js_j\)=\sum_{j=1}^nk_jf(s_j)=0$, so $s\in U$ and $U$ is full.
\end{proof}

\begin{definition}
The category $\Mod(A)$ of all $A$-modules has a zero object, so it makes sense to speak of kernels and cokernels.
It is easy to see that every morphism in $\Mod(A)$ possesses both.
In particular, let $f:S\to T$; then $\coker(f)=T/f(S)$.
We define the \e{image} and \e{coimage} of a morphism $f$ as
\begin{itemize}
\item $\im(f)\df\ker(\coker(f))$,
\item $\coim(f)\df\coker(\ker(f))$.
\end{itemize}
A morphism $f$ is called \e{strong}, if the natural map from $\coim(f)$ to $\im(f)$ is an isomorphism.
\end{definition}

\begin{proposition}\label{lem2.1}
\begin{enumerate}[\rm (a)]
\item A morphism $f:S\to T$ is strong if and only if it is full and 
$$
M_{\ker f}\to M_S\to M_T
$$ 
is exact.
\item For every submodule $U$ of $S$ the map $S\to S/U$ is strong.
\item Kernels and cokernels are strong.
\end{enumerate}
\end{proposition}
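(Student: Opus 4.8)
The plan is to identify all three objects $\ker(f)$, $\coim(f)$, $\im(f)$ explicitly, together with their universal modules, and then to read strongness off from Propositions~\ref{lem1.3} and~\ref{lem1.6}. For part (a), write $K=\ker(f)=(f^{-1}(0),M_{f^{-1}(0)})$, put $M_K=M_{f^{-1}(0)}$, and let $N=\ker(M_f)\subseteq M_S$; note that $M_K\subseteq N$. The coimage is $\coim(f)=\coker(K\hookrightarrow S)=S/K$, so its universal module is $M_S/M_K$. For the image, the cokernel is $T/f(S)$ with universal module $M_T/M_{f(S)}$, where $M_{f(S)}=\im(M_f)$ since $f(S)$ generates $\im(M_f)$; taking the kernel of $T\to T/f(S)$ and invoking Lemma~\ref{alem2.1}(a) gives $\im(f)=T\cap\im(M_f)$ as a submodule of $T$. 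A small but essential step is to show that the universal module of $\im(f)$ is exactly $\im(M_f)$: one has $f(S)\subseteq T\cap\im(M_f)\subseteq\im(M_f)$, and $f(S)$ already generates $\im(M_f)$, so the submodule generated by $T\cap\im(M_f)$ is again $\im(M_f)$. This holds unconditionally, and it is the one point where care is needed, because $\im(f)$ is presented by a generating set ($T\cap\im(M_f)$) a priori larger than $f(S)$.

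With these descriptions the natural map $c:\coim(f)\to\im(f)$ is $[s]\mapsto f(s)$, and on universal modules $M_c:M_S/M_K\to\im(M_f)$ is the map induced by $M_f$. Hence $M_c$ is always surjective, with kernel $N/M_K$. By Proposition~\ref{lem1.6}, $c$ is an isomorphism if and only if $c$ is full and $M_c$ is an isomorphism, so I would decouple these two conditions. The map $M_c$ is an isomorphism precisely when $N=M_K$, i.e.\ when $M_{\ker f}\to M_S\to M_T$ is exact. For fullness, observe that $\im(M_c)=\im(M_f)$ and that the codomain of $c$ sits inside $\im(M_f)$, so the defining equality for $c$ being full becomes $f(S)=\im(M_f)\cap(T\cap\im(M_f))=T\cap\im(M_f)$, which is exactly the statement that $f$ is full. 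Combining the two, $f$ is strong if and only if $f$ is full and the displayed sequence is exact.

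Parts (b) and (c) should then follow as specializations of (a). For (b), apply (a) to $\pi:S\to S/U$: by Lemma~\ref{alem2.1}(a), $\ker(\pi)=\ol U^\full=M_U\cap S$, whose universal module is $M_U$ (since $U\subseteq M_U\cap S$ and $U$ generates $M_U$), and $M_U=\ker(M_\pi)$, so $M_{\ker\pi}\to M_S\to M_S/M_U$ is exact; moreover $\pi$ is surjective and hence full, so $\pi$ is strong. For (c), a cokernel is the quotient map $T\to T/f(S)$ by the submodule $f(S)\subseteq T$, which is strong by (b). A kernel is the inclusion $\iota:\ker(f)\hookrightarrow S$ of a full submodule (kernels are full by Lemma~\ref{alem2.1}(b)); here $M_\iota$ is injective, so $\ker(\iota)=0$, $M_{\ker\iota}=0=\ker(M_\iota)$ yields exactness, and fullness of $\iota$ is precisely fullness of the submodule $\ker(f)$, whence $\iota$ is strong by (a).

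The main obstacle I anticipate is the bookkeeping in (a): correctly identifying the universal module of $\im(f)$ as $\im(M_f)$, and then verifying that the two hypotheses of Proposition~\ref{lem1.6} separate cleanly, with \emph{$M_c$ is an isomorphism} matching the exactness condition and \emph{$c$ is full} matching fullness of $f$. Once these identifications are secured, the proof essentially writes itself, and (b) and (c) reduce to checking the two conditions in the two standard cases.
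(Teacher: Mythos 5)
Your proof is correct and follows essentially the same route as the paper's: both identify $\coim(f)=S/\ker(f)$ (with universal module $M_S/M_{\ker f}$) and $\im(f)=\ol{f(S)}^\full$ via Lemma~\ref{alem2.1}, then read strongness off an isomorphism criterion (you use Proposition~\ref{lem1.6}, the paper uses Proposition~\ref{lem1.3}), and parts (b) and (c) are handled identically. Your explicit verification that $M_{\im(f)}=\im(M_f)$ is a useful detail the paper leaves implicit, but it does not change the argument.
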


\begin{proof}
For (a) let $f:S\to T$ be strong, so $S/\ker(f)=\coker(\ker(f))\to\ker(\coker(f))$ is an isomorphism.
By Lemma \ref{alem2.1} we have
$$
\ker(\coker(f))=\ker(T\to T/f(S))=\ol{f(S)}^\full,
$$
so $f$ is full. Further, by Lemma \ref{lem1.3} the map 
$M_S/M_{\ker(f)}\cong M_{S/\ker(f)}\to M_T$ is injective, which is equivalent to the claimed exactness.
The converse direction uses the same arguments.

(b) Let $f:S\to S/U$ the canonical map.
Then $f$ is surjective, hence full.
Further we have $\ker(f)=\ol{U}^\full=M_U\cap S$ by Lemma \ref{alem2.1}.
Hence $M_{\ker(f)}=M_{\ol U^\full}=M_U$.
Since on te other hand, $M_{S/U}=M_S/M_U$, the sequence $M_{\ker(f)}\to M_S\to M_{S/U}$ equals $M_U\to M_S\to M_S/M_U$, so it is exact.

(c) A cokernel ist strong by (b).
Let $F:S\to T$ and let $\al:R\to s$ be its kernel.
We claim that $\al$ is strong.
Firstly, it is full by Lemma \ref{alem2.1} and secondly, the sequence $M_{\ker(\al)}\to M_R\to M_S$ is $0\to M_{\ker(f)}\to M_S$ which is exact as $\ker(f)$ is a submodule of $S$.
\end{proof}

\begin{lemma}\label{lem2.2}
\begin{enumerate}[\rm (a)]
\item Let $f:S\to T$ and $g: F\to H$ be full morphisms, then $f\odot g:S\odot F\to T\odot H$ is a full morphism.
\item If $f:S\to T$ is a strong morphism, then $1\odot f:F\odot S\to F\odot T$ is a strong morphism.
\end{enumerate}
\end{lemma}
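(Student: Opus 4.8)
The plan is to prove (a) first and then bootstrap (b) from it; in both cases everything reduces to a single statement about the tensor product of full \emph{monomorphisms}. For (a), I would factor each full morphism. Since $f$ is full we have $f(S)=\im M_f\cap T=\im f$, so $f$ factors as $S\tto{\pi_f}\im f\tto{\iota_f}T$ with $\pi_f$ set-theoretically surjective, $M_{\pi_f}\colon M_S\to M_{\im f}=\im M_f$ surjective, and $\iota_f$ the inclusion of the full submodule $\im f$; factoring $g$ the same way gives $f\odot g=(\iota_f\odot\iota_g)\circ(\pi_f\odot\pi_g)$. The first map $P:=\pi_f\odot\pi_g$ is surjective, since every simple tensor of $\im f\odot\im g$ has the form $\pi_f(s)\odot\pi_g(x)$, and $M_P=M_{\pi_f}\otimes M_{\pi_g}$ is onto. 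Writing $L:=\iota_f\odot\iota_g$, fullness of $L$ gives $(LP)(S\odot F)=L(\im f\odot\im g)=\im M_L\cap(T\odot H)$, while $\im M_{LP}=\im M_L$ because $M_P$ is onto; hence $f\odot g=LP$ is full as soon as $L$ is. Everything thus reduces to: the tensor product of two full monomorphisms is full.

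For this reduced statement, let $U=M_U\cap T$ and $V=M_V\cap H$ be full submodules and let $w=t\otimes h$ be a simple tensor with $t\in T$, $h\in H$ lying in $J:=\im(M_U\otimes M_V\to M_T\otimes M_H)$; I must exhibit $u\in U$, $v\in V$ with $t\otimes h=u\otimes v$. Over a field this is immediate: mapping $w$ to $(M_T/M_U)\otimes M_H$ and to $M_T\otimes(M_H/M_V)$ kills it, forcing $t\in M_U$ and $h\in M_V$, whence $t\in M_U\cap T=U$ and $h\in M_V\cap H=V$ by fullness. Over a general ring $R_A$ these two reductions only give $\bar t\otimes h=0$ and $t\otimes\bar h=0$, so instead I would track the divisibility data recorded by $J$ (the submodule $\im(M_U\otimes M_V)$ remembers how $M_U$ and $M_V$ sit inside $M_T$ and $M_H$) and use the saturation equalities $U=M_U\cap T$, $V=M_V\cap H$ to absorb the ``extra'' factors of $t$ and $h$ into a single simple tensor with both entries full. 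This is the main obstacle: $-\otimes-$ over $R_A$ is neither left exact nor compatible with the simple-tensor structure, so collapsing $w$ to one simple tensor with full factors is exactly where the fullness hypothesis does its work.

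For (b), fullness of $1\odot f$ comes for free from (a), since $f$ is full (being strong) and $1$ is an isomorphism, hence full. By Proposition \ref{lem2.1}(a) it remains to prove the exactness $M_{\ker(1\odot f)}=\ker(1\otimes M_f)$. With the factorization $f=\iota_f\circ\pi_f$, the surjective part is clean: from the short exact sequence $0\to M_{\ker f}\to M_S\to\im M_f\to0$, where $M_{\ker f}=\ker M_f$ by strongness, right exactness of $\otimes$ yields $\ker(1\otimes M_{\pi_f})=\im(M_F\otimes M_{\ker f}\to M_F\otimes M_S)$, which is generated by the killed simple tensors $y\otimes k$ with $y\in F$, $k\in\ker f$, and so lies in $M_{\ker(1\odot f)}$. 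The remaining part of $\ker(1\otimes M_f)$ consists of lifts along the surjection $1\otimes M_{\pi_f}$ of elements of $\ker(1\otimes M_{\iota_f})$, so (b) reduces to the monomorphism case: $\ker(1\otimes M_{\iota_f})$ is generated by simple tensors $y\otimes i$ with $i\in\im f$ that vanish in $M_F\otimes M_T$. Granting this, fullness of $f$ lets me write $i=f(s)$ with $s\in S$, and then $y\otimes s\in F\odot S$ is a killed simple tensor lifting $y\otimes i$; this gives $\ker(1\otimes M_f)\subseteq M_{\ker(1\odot f)}$ and hence equality. So the hard part of (b) is once more a statement about $1\otimes M_\iota$ for a full monomorphism $\iota$ — now about its kernel, a $\operatorname{Tor}$-type term, rather than its image — to be handled by the same saturation mechanism that drives (a).
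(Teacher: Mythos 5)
Your reduction of (a) to the single claim that the tensor product of two full \emph{monomorphisms} is full is correct, and your factorization argument for (b) likewise funnels everything into claims about a full monomorphism; this isolates exactly the step on which the lemma hinges. But you never prove that claim: you call it ``the main obstacle'' and gesture at a ``saturation mechanism'', which is precisely where the proof would have to live. This is a genuine gap, and it cannot be filled, because the reduced claim is false. Take $A=\F_1=\{0,1\}$, so $R_A=\Z$ and an $A$-module is just a generating subset, containing $0$, of an abelian group. Let $T=\{0,1,2,4\}\subset M_T=\Z$; let $U=\{0,4\}$ with $M_U=4\Z$, a full submodule since $4\Z\cap T=\{0,4\}$; and let $F=H=\{0,2,3\}\subset M_H=\Z$ with $g=\mathrm{id}_H$. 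Identify $\Z\otimes_\Z\Z\cong\Z$ by $a\otimes b\mapsto ab$. The image of $\iota_U\odot\mathrm{id}_H$ is then $\{uh:u\in\{0,4\},\,h\in\{0,2,3\}\}=\{0,8,12\}$, which generates the subgroup $4\Z$; but $2\otimes 2=4$ is a simple tensor of $T\odot H$ that lies in $4\Z$ and is not of the form $u\otimes h$ with $u\in U$, $h\in H$ (those give only $0,8,12$). So $\iota_U\odot\mathrm{id}_H$ is not full even though both factors are. Moreover $\iota_U$ is strong ($\ker\iota_U=0$ and $0\to 4\Z\to\Z$ is exact), so the same example refutes part (b) as well: $1\odot\iota_U$ fails to be full, hence fails to be strong. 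There is no saturation mechanism to find: fullness of $U$ in $T$ and of $V$ in $H$ gives no control over a simple tensor like $2\otimes 2$ whose factors lie in neither $M_U$ nor $M_V$ individually.

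For comparison, the paper's own proof of (a) is a chain of equalities whose third step, $\(M_{f(S)}\cap T\)\odot\(M_{g(F)}\cap H\)=\(M_{f(S)}\otimes M_{g(F)}\)\cap\(T\odot H\)$, is exactly your reduced claim, asserted there without any justification; only the inclusion $\subseteq$ holds in general. Its proof of (b) tensors the left-exact sequence $0\to M_{\ker f}\to M_S\to M_T$ with $M_F$ and declares the result exact, which would require $M_F$ to be flat. So you located the soft spot correctly---it is the very step the paper glosses over---but neither your argument nor the paper's closes it, and the counterexample above shows it cannot be closed without additional hypotheses (for instance $T=M_T$ and $H=M_H$, or flatness assumptions on the modules involved). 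As it stands, your proposal is an honest reduction to an unprovable statement, not a proof.
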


\begin{proof}
(a) In the notation of the lemma we have
\begin{align*}
f\odot g(S\odot F)&=F(S)\odot g(F)\\
&= \(M_{f(S)}\cap T\)\odot\(M_{g(F)}\cap H\)\\
&= \(M_{f(S)}\otimes M_{g(F)}\)\cap\(T\odot H\)\\
&= M_{f(S)\odot g(F)}\cap \( T\odot H\)\\
&= M_{f\odot g(S\odot F)}\cap \(T\odot H\).
\end{align*}

(b) The sequence 
$$
0\to M_{\ker(f)}\to M_S\to M_T
$$
is exact.
Therefore the sequence 
$$
M_F\otimes M_{\ker(f)}\to M_F\otimes M_S\to M_F\otimes M_T
$$ 
is exact. But this is the sequence
$$
M_{F\odot\ker(f)}\to M_{F\odot S}\to M_{F\odot T}.
$$
By (b) we know that $1\odot f$ is full, so by Proposition \ref{lem2.1} the claim follows.\end{proof}

\section{Flat an etale maps}

\begin{definition}
A sequence of morphisms $S\tto\al T\tto\be U$ is called \e{exact at $T$}, if $\be\al=0$ and the ensuing morphism $\im(\al)\to\ker(\be)$ is an isomorphism.
A sequence $\dots\to S_{i-1}\to S_i\to S_{i+1}\to\dots$ is called \e{exact}, if it is exact at every $S_j$.
A sequence is called \e{strong}, if all morphisms in it are strong and \e{strong exact} if it is strong and exact.
\end{definition}

\begin{lemma}\label{lem3.1}
Let $(*)\equiv\ S\tto fT\tto gU$ be a sequence. 
\begin{enumerate}[\rm (a)]
\item If $f$ is strong and $(*)$ is exact, then the ensuing sequence
\begin{align*}
M_S\tto{M_f}M_T\tto{M_g}M_U\tag*{$M(*)$}
\end{align*}
of $R_A$-modules is exact.
\item If $g$ is strong and the sequence $M(*)$ is exact, then $(*)$ is exact.
\end{enumerate}\end{lemma}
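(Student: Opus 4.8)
The plan is to reduce both implications to a single comparison at the universal-module level, namely between $\ker(M_g)\subset M_T$ and the universal module $M_{\ker g}$ of the kernel submodule $\ker(g)\subset T$. Images will cause no trouble: for any morphism one has $\im(M_f)=M_f(M_S)=M_{f(S)}$ because $S$ generates $M_S$, while the computation in the proof of Proposition \ref{lem2.1}(a) identifies $\im(f)=\ker(\coker f)$ with $\ol{f(S)}^\full=M_{f(S)}\cap T$, whose universal module is again $M_{f(S)}$. Thus passage to universal modules commutes with taking images, with no hypotheses.

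First I would rephrase the two exactness conditions. Since $gf=0$ forces $f(S)\subset\ker(g)$ and hence $M_{f(S)}\subset\ker(M_g)$, the natural inclusion $\im(f)\hookrightarrow\ker(g)$ is automatically full, because $M_{f(S)}\cap\ker(g)=M_{f(S)}\cap T=\im(f)$ (using $M_{f(S)}\subset\ker(M_g)$). By Proposition \ref{lem1.6} it is therefore an isomorphism exactly when the induced map of universal modules is, i.e. exactly when $M_{f(S)}=M_{\ker g}$. So $(*)$ is exact at $T$ iff $gf=0$ and $M_{f(S)}=M_{\ker g}$. On the other hand, $M(*)$ is exact at $M_T$ iff $\im(M_f)=\ker(M_g)$, i.e. iff $M_{f(S)}=\ker(M_g)$.

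In each part the real work is done by the single identity $\ker(M_g)=M_{\ker g}$. In general only $M_{\ker g}\subset\ker(M_g)$ holds, and the inclusion can be strict; this discrepancy is precisely what separates the two notions of exactness, and it is closed by strongness: by Proposition \ref{lem2.1}(a), strongness of the map $g$ sitting at the exactness spot $T$ is equivalent to fullness together with $\ker(M_g)=M_{\ker g}$. Granting this identity, both parts fall out. For (a), exactness of $(*)$ gives $\im(M_f)=M_{f(S)}=M_{\ker g}=\ker(M_g)$, so $M(*)$ is exact. For (b), exactness of $M(*)$ gives $M_{f(S)}=\ker(M_g)=M_{\ker g}$; together with $gf=0$, which follows from $\im(M_f)\subset\ker(M_g)$ (whence $M_gM_f=0$ and so $g(f(s))=0$ for all $s\in S$), the criterion of the previous paragraph shows $(*)$ is exact at $T$.

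I expect the only genuine obstacle to be this kernel discrepancy $M_{\ker g}\subsetneq\ker(M_g)$: an element of $\ker(M_g)$ is an $R_A$-combination of members of $T$ that need not individually lie in $\ker(g)$, so without a strongness hypothesis the kernel submodule fails to generate the honest kernel. Everything else is formal, since images commute with passage to universal modules and the fullness half of the isomorphism criterion in Proposition \ref{lem1.6} is automatic here.
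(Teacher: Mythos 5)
Your proposal is correct, and it runs on the same engine as the paper's proof, namely Proposition \ref{lem2.1}(a): strongness of $g$ supplies the one identity that matters, $\ker(M_g)=M_{\ker g}$, while images cost nothing because the universal module of $\im(f)=\ol{f(S)}^\full$ is exactly $\im(M_f)=M_{f(S)}$. For part (a) your argument is essentially the paper's, spelled out more carefully (the paper compresses it to: exactness of $(*)$ gives $M_{\ker g}=M_{f(S)}$, and strongness of $g$ identifies this with $\ker(M_g)$). For part (b) you genuinely diverge: the paper argues elementwise, taking $t\in\ker(g)$, placing it in $\ker(M_g)=M_{f(S)}$ by exactness of $M(*)$, and then invoking fullness of $f$ to conclude $t\in M_{f(S)}\cap T=f(S)$; you instead stay at the universal-module level and use strongness of $g$. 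Your intermediate criterion --- given $gf=0$, the sequence $(*)$ is exact at $T$ if and only if $M_{f(S)}=M_{\ker g}$ --- is correctly derived (the fullness of the inclusion $\im(f)\hookrightarrow\ker(g)$ holds exactly as you say, so Proposition \ref{lem1.6} applies), and it organizes both halves more cleanly than the paper does.

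One thing you should have flagged rather than passed over silently: the hypotheses. As printed, (a) assumes $f$ strong and (b) assumes $g$ strong, whereas your proof uses strongness of $g$ in \emph{both} parts. For (b) that is the stated hypothesis, so your proof is consistent with the statement (indeed more so than the paper's own proof, which invokes fullness of $f$ without ever assuming it). But in (a) you quietly substituted a different hypothesis for the stated one, and the substitution is unavoidable, because (a) as printed is false. Take $A=(\Z,\Z)$, let $M_T=\Z^2$ with $T$ the union of the two coordinate axes, let $U=(\Z,\Z)$ with $g$ the restriction of $M_g(a,b)=a+b$, and let $S=0$, $f=0$. Then $f$ is strong and $S\tto fT\tto gU$ is exact, since $\im(f)=0=\ker(g)$; but $\ker(M_g)=\Z\cdot(1,-1)\ne 0=\im(M_f)$, so $M(*)$ is not exact (here $g$ is full but not strong). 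So the printed lemma has its strongness hypotheses attached to the wrong parts --- the paper's own proofs use $g$ in (a) and $f$ in (b) --- and what you actually proved in (a) is the corrected statement; a complete write-up should say this explicitly instead of hiding the switch. Incidentally, your criterion shows that (b) needs no strongness at all: $gf=0$ gives $M_{f(S)}\subset M_{\ker g}\subset\ker(M_g)$, and exactness of $M(*)$ gives $\ker(M_g)=M_{f(S)}$, forcing $M_{f(S)}=M_{\ker g}$.
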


\begin{proof}
(a) Assume the sequence $S\tto fT\tto gU$ is exact.
Since $g$ is strong, by Proposition \ref{lem2.1} it induces an isomorphism $M_U\cong M_T/M_{\ker(g)}=M_T/M_{f(S)}=M_T/M_f(M_S)$, which means that the sequence of $R_A$-modules is exact.

(b) Assume $g$ is strong and $M(*)$ exact.
Let $t\in\ker(g)$. Since $M(*)$ is exact, we have $t\in M_f(M_S)=M_{f(S)}$, hence $t\in M_{f(S)}\cap T=f(S)$ since $f$ is strong.
\end{proof}

\begin{lemma}
Let $f:S\to T$ be strong and suppose that $\ker(f)=0$.
Then the sequence
$$
0\to S\tto fT\to T/f(S)\to 0
$$
is strong exact.
\end{lemma}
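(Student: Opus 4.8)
The plan is to check the two ingredients of \emph{strong exactness} one at a time: that each of the four morphisms in the sequence is strong, and that the sequence is exact at each of its three terms. Throughout I would write $\pi$ for the canonical map $T\to T/f(S)$, and recall that $f(S)$ is a submodule of $T$ so that the quotient $T/f(S)$ and the map $\pi$ make sense.

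First I would settle strongness. The morphism $f$ is strong by hypothesis. The map $\pi$ is exactly the quotient map attached to the submodule $U=f(S)\subset T$, so it is strong by Proposition \ref{lem2.1}(b). The two morphisms $0\to S$ and $T/f(S)\to 0$ that involve the zero object are strong for trivial reasons: in each case both the coimage and the image compute to the zero module, so the comparison map between them is an isomorphism. This exhausts the strongness requirement.

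Next comes exactness, which I expect to be almost entirely formal. Exactness at the middle term $T$ is built into the definitions: by construction $T/f(S)$ is the cokernel of $f$ with structure map $\pi$, so $\im(f)=\ker(\coker f)=\ker(\pi)$ on the nose, and the comparison morphism $\im(f)\to\ker(\pi)$ is the identity; together with $\pi f=0$ (the submodule $f(S)$ is killed by $\pi$) this gives exactness at $T$. Exactness at $S$ is precisely the hypothesis $\ker(f)=0$, since the image of $0\to S$ is the zero submodule and exactness at $S$ asks that this coincide with $\ker(f)$. Finally, exactness at $T/f(S)$ follows from the surjectivity of $\pi$: since $M_\pi$ is onto, $\pi$ is an epimorphism by Lemma \ref{lem1.3}(c), hence $\coker(\pi)=0$ and $\im(\pi)=T/f(S)=\ker\bigl(T/f(S)\to 0\bigr)$.

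I do not anticipate a genuine obstacle; the content is a matter of unwinding the definitions of image, cokernel and quotient. The one point that deserves attention is keeping straight that \emph{strong exact} demands both that all four morphisms be strong and that the sequence be exact at all three terms, and observing that the hypothesis ``$f$ strong'' is used only to make $f$ qualify as a strong morphism — exactness itself requires nothing of $f$ beyond $\ker(f)=0$. If desired, one can note via Proposition \ref{lem2.1}(a) that a strong $f$ with $\ker(f)=0$ is automatically full with $M_f$ injective, but this refinement is not needed for the statement.
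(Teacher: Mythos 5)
Your proof is correct and structurally the same as the paper's: both verify strongness of each morphism (the quotient map $T\to T/f(S)$ via Proposition \ref{lem2.1}(b), $f$ by hypothesis, the two morphisms involving the zero object trivially) and then check exactness at the three terms, with exactness at $S$ coming from $\ker(f)=0$ and exactness at $T/f(S)$ from surjectivity of the quotient map. The one point where you diverge is the middle term: the paper appeals to the hypotheses there, noting that since $f$ is strong with zero kernel it induces an isomorphism $S\to f(S)$ and $\im(f)\cong S$, whereas you observe that exactness at $T$ is definitional, because $T/f(S)$ \emph{is} $\coker(f)$, so $\im(f)=\ker(\coker f)=\ker(\pi)$ and the comparison map is the identity. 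Your observation is sound, and in fact sharper: it isolates precisely where each hypothesis enters (strongness of $f$ is needed only so the sequence qualifies as strong, and $\ker(f)=0$ only for exactness at $S$), while the paper's phrasing suggests the hypotheses are also needed for exactness in the middle, which they are not. Both routes are valid proofs of the lemma.
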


\begin{proof}
The map $g:T\to T/f(S)$ is strong by Proposition \ref{lem2.1}.
The sequence is clearly exact at $S$ and at $T/f(S)$.
At $T$ we have $\im(f)\cong S$ and $f$ is an isomorphism $S\to f(S)$ as $f$ is strong and has zero kernel.\end{proof}

\begin{lemma}
 Let 
$$
0\to S\tto\al T\tto\beta U\to0
$$ 
be strong exact and $F$ arbitrary.
Then the sequence
$$
F\odot S\to F\odot T\to F\odot U\to 0
$$
is strong exact.
\end{lemma}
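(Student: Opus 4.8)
The plan is to reduce everything to the ordinary right-exactness of the tensor product over $R_A$ and then descend the resulting exactness back to the sesquiad category using the strongness criteria of Section 2. The strongness of the two maps $1\odot\al$ and $1\odot\be$ in the target sequence is handed to us for free by Lemma \ref{lem2.2}(b), since $\al$ and $\be$ are strong; so the entire difficulty lies in establishing exactness.

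First I would pass to universal modules. Because the given sequence is strong exact, exactness at $S$ forces $\ker(\al)=0$, so Proposition \ref{lem2.1}(a) shows $M_\al$ is injective; exactness at $T$ together with the strongness of $\al$ and Lemma \ref{lem3.1}(a) gives exactness of $M_S\to M_T\to M_U$; and exactness at $U$ exhibits $\be$ as a full epimorphism, so $M_\be$ is surjective by Lemma \ref{lem1.3}(c). Altogether
$$
0\to M_S\tto{M_\al}M_T\tto{M_\be}M_U\to 0
$$
is a short exact sequence of $R_A$-modules.

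Next I would tensor this over $R_A$ with $M_F$. Since $M_{F\odot X}=M_F\otimes M_X$ for every module $X$ and the ordinary tensor product is right exact, the sequence
$$
M_{F\odot S}\to M_{F\odot T}\to M_{F\odot U}\to 0
$$
is exact. To descend this I would argue at the two relevant spots. At $F\odot T$ the middle map $1\odot\be$ is strong by Lemma \ref{lem2.2}(b) and the displayed $M$-sequence is exact there, so Lemma \ref{lem3.1}(b) yields exactness of $F\odot S\to F\odot T\to F\odot U$ at $F\odot T$. At $F\odot U$ I would note that $1\odot\be$ is epic, because $M_{1\odot\be}=M_F\otimes M_\be$ is surjective (Lemma \ref{lem1.3}(c)); hence $\coker(1\odot\be)=0$ and $\im(1\odot\be)=\ker(\coker(1\odot\be))=F\odot U$, which is exactness at $F\odot U$. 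Combined with the strongness from Lemma \ref{lem2.2}(b), the sequence is strong exact.

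The main obstacle is the middle exactness at $F\odot T$: right-exactness of $\otimes$ only supplies it at the level of universal modules, and translating it back is exactly where the strongness of $1\odot\be$ is indispensable, since it lets the $R_A$-theoretic kernel--image equality be intersected back with $F\odot T\subset M_{F\odot T}$ through Lemma \ref{lem3.1}(b). Note that no left-exactness is claimed, matching the fact that $F\odot(-)$ is only right exact; so the sole genuine content is organizing the strong/full bookkeeping so that Lemmas \ref{lem3.1} and \ref{lem2.2} apply cleanly.
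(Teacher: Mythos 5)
Your proof is correct, and in substance it is the paper's own argument with the compressed steps written out. The paper's proof is three lines: all maps in the tensored sequence are strong (Lemma \ref{lem2.2}), $1\odot\beta$ is surjective because $\beta$ is, and middle exactness is asserted via the chain $\ker(1\odot\beta)=F\odot\ker(\beta)=F\odot\alpha(S)=(1\odot\alpha)(F\odot S)$. The first equality in that chain is left unjustified there, and it is exactly the point your proof makes rigorous: it holds because $\ker(M_F\otimes M_\beta)=\im(M_F\otimes M_\alpha)$, by right exactness of $\otimes_{R_A}$ applied to the short exact sequence $0\to M_S\to M_T\to M_U\to 0$ that you first construct, together with fullness of $1\odot\alpha$, which lets one intersect back with $F\odot T$ to recover $(1\odot\alpha)(F\odot S)$ --- this is precisely your descent step via Lemma \ref{lem3.1}. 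So your route (build the $M$-level short exact sequence, tensor, then descend) supplies a complete justification of what the paper only asserts. One caveat you should note: the hypotheses of Lemma \ref{lem3.1}(a) and (b) as stated are swapped relative to what their proofs actually use --- the proof of (a) needs $g$ strong, while the proof of (b) needs $f$ full --- but both of your applications are immune to this, since in each one both morphisms of the relevant sequence are strong (by hypothesis for $\alpha$ and $\beta$, and by Lemma \ref{lem2.2}(b) for $1\odot\alpha$ and $1\odot\beta$), so either reading of the lemma applies.
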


\begin{proof} All morphisms are strong.
Therefore $1\odot\beta$ is surjective.
It remains to show that $1\odot\al(F\odot S)=\ker(1\odot\beta)$.
We have $\ker(1\odot\beta)=F\odot\ker(\beta)=F\odot\al(S)=1\odot\al(F\odot S)$.
\end{proof}

\begin{definition}
A module $F$ is called \e{flat}, if for every strong exact sequence
$$
0\to S\to T\to U\to 0
$$
the sequence
$$
0\to F\odot S\to F\odot T\to F\odot U\to 0
$$
is exact.
By Lemma \ref{lem2.2} the latter sequence  is strong.
\end{definition}

\begin{proposition}
\begin{enumerate}[\rm (a)]
\item $F$ is flat if and only if $M_F$ is flat as $R_A$-module.
\item $F$ is flat if and only if for every finitely generated ideal $\a$ of $A$ the map $\a\odot F\to F$ is injective.
\end{enumerate}
\end{proposition}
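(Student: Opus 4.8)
The plan is to establish (a) first and then derive (b) from it together with the classical finitely generated ideal criterion for flatness over a ring. Everything rests on the dictionary, furnished by Lemmas \ref{lem2.2} and \ref{lem3.1}, between strong exact sequences of $A$-modules and exact sequences of $R_A$-modules, on the identity $M_{S\odot T}=M_S\otimes M_T$, and on the characterisations of monomorphisms and epimorphisms in Lemma \ref{lem1.3}.

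For the forward implication of (a) I would begin with an arbitrary short exact sequence $0\to M'\to M\to M''\to0$ of $R_A$-modules and lift it to $A$-modules via the assignment $N\mapsto(N,N)$. The point to check is that this assignment sends $R_A$-linear maps to full morphisms and short exact sequences to strong exact sequences: for $g:(N,N)\to(N',N')$ one has $g(N)=M_g(N)=M_g(N)\cap N'$, so $g$ is full, and strongness then follows from Proposition \ref{lem2.1} because the associated sequence of universal modules is the original exact one. Plugging the resulting strong exact sequence into the definition of flatness of $F$ yields exactness of $0\to F\odot(M',M')\to F\odot(M,M)\to F\odot(M'',M'')\to0$; since the universal modules here are $M_F\otimes M'$, $M_F\otimes M$ and $M_F\otimes M''$, Lemma \ref{lem1.3} transports this to exactness of $0\to M_F\otimes M'\to M_F\otimes M\to M_F\otimes M''\to0$, so $M_F$ is flat.

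For the reverse implication of (a) I would take a strong exact sequence $0\to S\tto\al T\tto\be U\to0$ and apply Lemma \ref{lem3.1}(a), together with Lemma \ref{lem1.3}, to obtain the exact sequence $0\to M_S\to M_T\to M_U\to0$ of $R_A$-modules. Flatness of $M_F$ then makes $0\to M_F\otimes M_S\to M_F\otimes M_T\to M_F\otimes M_U\to0$ exact. These are the universal modules of $F\odot S$, $F\odot T$ and $F\odot U$, and the maps $1\odot\al$, $1\odot\be$ are strong by Lemma \ref{lem2.2}(b); hence Lemma \ref{lem3.1}(b) descends exactness at the middle back to the $A$-modules, while Lemma \ref{lem1.3} handles the two ends. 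Thus $F$ is flat and (a) is proved.

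For (b) I would use (a) to trade flatness of $F$ for flatness of $M_F$ over $R_A$ and then invoke the classical theorem that an $R_A$-module is flat if and only if $\a\otimes_{R_A}M_F\to M_F$ is injective for every finitely generated ideal $\a$. The only translation required is that $M_{\a\odot F}=M_\a\otimes M_F$ and that, by Lemma \ref{lem1.3}(b), the map $\a\odot F\to F$ is a monomorphism exactly when $M_\a\otimes M_F\to M_F$ is injective. The forward direction is then immediate, since flat modules satisfy the injectivity for every ideal. I expect the main obstacle to be the reverse direction: one must ensure that the finitely generated ideals being tested are rich enough to detect flatness of $M_F$ as a module over the ring $R_A$ --- that is, that they account for the finitely generated ideals of $R_A$ and not merely for those visibly coming from the monoid --- after which the statement becomes the substantive half of the classical criterion, proved through the equational criterion by rewriting an arbitrary relation over $R_A$ in terms of the generators supplied by $A$.
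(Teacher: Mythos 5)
Your part (a) is correct and follows essentially the paper's own route: the direction ``$M_F$ flat $\Rightarrow F$ flat'' is the same appeal to Lemma \ref{lem2.2} and the Lemma \ref{lem3.1} dictionary, and the converse uses the same lifting $N\mapsto (N,N)$ of $R_A$-modules with full addition (the paper feeds an injection $0\to X\to Y$ into the flatness hypothesis where you feed a full short exact sequence, and your appeal to Lemma \ref{lem1.3} for exactness at the middle term should really be to Lemma \ref{lem3.1}(a), using that the tensored sequence is strong; both are cosmetic differences).

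Part (b) is where the genuine gap lies, and you have put your finger on it yourself. The paper offers no argument here --- it says only that the proof ``is similar to the ring case as in Chapter 1 of \cite{Bourb}'' --- but the obstacle you flag cannot be discharged by the remedy you sketch. Rewriting a relation $\sum_i r_ix_i=0$ over $R_A$ with coefficients in $A$ only lets you test $R_A$-ideals generated by elements of $A$: an ideal $\a$ of $A$ is a submodule of $(A,R_A)$, so $M_\a$ is always generated by elements of $A$. The classical argument, however (Bourbaki's d\'evissage through $\mathrm{Tor}_1(R_A/I,M_F)$, or the induction from ideals to finitely generated submodules of free modules), needs injectivity of $I\otimes M_F\to M_F$ for \emph{arbitrary} finitely generated ideals $I$ of $R_A$; the ideals arising there as annihilators or coordinate projections are not of the form $M_\a$. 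This class of test ideals is genuinely too small. Concretely, let $A=\{0\}\cup\{\pm X^k:k\ge 0\}\subset\Z[X]$, so that $R_A=\Z[X]$, and let $F=(M,M)$ with $M=\Z[X]/(X-1)$. Every finitely generated ideal $\a$ of $A$ has $M_\a=(X^k)$ for some $k$, and $(X^k)\otimes M\to M$ is injective because $(X^k)\cong\Z[X]$ and $X$ acts as $1$ on $M$; yet $M$ is not flat (tensoring the injection $\Z[X]\tto{X-1}\Z[X]$ with $M$ gives the zero map on $M\ne 0$), so by part (a) the module $F$ is not flat. Hence the reverse implication of (b) cannot be reached by your route; indeed, under this natural reading of ``ideal of $A$'' the statement itself fails, so completing (b) would require a richer notion of ideal than the one available in the paper, not just a more careful run through the equational criterion.
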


\begin{proof}
(a) Let  $0\to S\to T\to U\to 0$ be strong exact. Then the sequence $0\to F\odot S\to F\odot T\to F\odot U\to 0$ is strong and by Lemma \ref{lem3.1} it is exact if and only if the sequence
$$
0\to M_F\otimes M_S\to M_F\otimes M_T\to M_F\otimes M_U\to 0
$$
is exact. The latter is the case if $M_F$ is flat.

For the converse direction let $0\to X\tto fY$ be an exact sequence of $R_A$-modules and let $F$ be a flat $A$-module.
Consider $X$ and $Y$ as $A$-modules by leaving the full addition, which means that $M_X=X$ and $M_Y=Y$.
The sequence $0\to F\odot X\to F\odot Y$ is strong exact and by Lemma \ref{lem3.1} the sequence $0\to M_F\otimes X\to M_F \otimes Y$ is exact which means that $M_F$ is flat.
The proof of (b) is similar to the ring case as in Chapter 1 of \cite{Bourb}.
\end{proof}

\begin{definition}
A congruence $C$ on a sesquiad $A$ is called \e{finitely generated} if there are $a_1,\dots,a_n, b_1,\dots b_n\in A$ such that
$$
C=\bigcap_{a_j\sim_E b_j}E,
$$
where the intersection is extended over all congruences $E$ such that $a_1\sim_E b_1,\dots, a_n\sim_E b_n$ holds.
\end{definition}

\begin{lemma}
A congruence $C$ is finitely generated if and only if the corresponding ideal $I(C)\subset R_A$ is finitely generated.
\end{lemma}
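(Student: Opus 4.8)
The plan is to work with the correspondence $C\mapsto I(C)$ and its partial inverse. Here $I(C)\subset R_A$ is the ideal generated by all differences $a-b$ with $a,b\in A$ and $a\sim_C b$, while to an ideal $I\subset R_A$ one associates the relation $C(I)$ in which $a\sim b$ precisely when $a-b\in I$. One first records that $C(I)$ is always a congruence: it is compatible with multiplication because $a-b\in I$ forces $ca-cb=c(a-b)\in I$, and it is compatible with every partial addition $\Sigma_k$ because $I$ is closed under $\Z$-linear combinations. Both assignments are inclusion-preserving, and $C\subseteq C(I(C))$ holds tautologically, since each generating difference $a-b$ of $I(C)$ of course lies in $I(C)$.

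For the direction that $C$ finitely generated implies $I(C)$ finitely generated, write $C=\bigcap_{a_j\sim_E b_j}E$ for finitely many pairs $(a_1,b_1),\dots,(a_n,b_n)$, and let $J$ be the ideal of $R_A$ generated by $a_1-b_1,\dots,a_n-b_n$. Then $J\subseteq I(C)$ is immediate. Conversely $C(J)$ is a congruence with $a_j\sim_{C(J)}b_j$ for every $j$, so it is one of the congruences $E$ occurring in the intersection that defines $C$; hence $C\subseteq C(J)$. Therefore every relation $a\sim_C b$ satisfies $a-b\in J$, and since such differences generate $I(C)$ we obtain $I(C)\subseteq J$. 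Thus $I(C)=J$ is finitely generated. Note this computation also shows, for any congruence $C'$ generated by finitely many pairs $(a_j,b_j)$, that $I(C')$ equals the ideal generated by the $a_j-b_j$.

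For the converse, suppose $I(C)$ is finitely generated. Because $I(C)$ is generated by the whole set $\{a-b:\ a\sim_C b\}$, I can extract finitely many pairs $(a_1,b_1),\dots,(a_n,b_n)\in C$ whose differences already generate $I(C)$: each of the finitely many ideal generators is an $R_A$-combination of finitely many such differences, so finitely many of them suffice. Let $C'$ be the congruence generated by these pairs. Then $C'$ is finitely generated, and $C'\subseteq C$ since all its generating pairs lie in $C$; moreover, by the remark at the end of the previous paragraph, $I(C')$ equals the ideal generated by $a_1-b_1,\dots,a_n-b_n$, which is $I(C)$. So $C'\subseteq C$ and $I(C')=I(C)$.

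It remains to upgrade $I(C')=I(C)$ to $C'=C$, and this is the step I expect to be the crux. It reduces to the \emph{saturation} identity $C=C(I(C))$, i.e.\ that a congruence is recovered from its ideal by declaring $a\sim_C b$ exactly when $a-b\in I(C)$. Granting this, from $a\sim_C b$ we get $a-b\in I(C)=I(C')$, hence $a\sim_{C'}b$ by saturation of $C'$, which yields $C\subseteq C'$ and thus $C=C'$, finitely generated. The saturation identity is precisely the assertion that the quotient sesquiad $A/C$ embeds into $R_A/I(C)$, which is built into the notion of congruence on a sesquiad (the quotient must again be a sesquiad, hence must sit inside its universal ring) and which I would invoke from \cite{congruence} rather than reprove. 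This injectivity of $C\mapsto I(C)$ is the only non-formal ingredient; the transfer of finite generation itself is the elementary bookkeeping carried out above, and I would expect no further obstacle once the dictionary between congruences and ideals is in place.
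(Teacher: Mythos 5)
Your proposal is correct and takes essentially the same route as the paper's own (much terser) proof: the forward direction by showing that the differences of the generating pairs generate $I(C)$, and the converse by extracting finitely many pairs $(a_j,b_j)\in C$ whose differences generate $I(C)$ and then showing these pairs generate $C$. What you add is an explicit justification of the two steps the paper merely asserts, via the congruence--ideal dictionary and the saturation identity $C=C(I(C))$, which you rightly identify as the crux and legitimately import from \cite{congruence}.
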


\begin{proof}
Let $C$ be finitely generated, say by $a_1\sim b_1,\dots,a_n\sim b_n$.
Then $I(C)$ is generated by the elements $a_1-b_1,\dots,a_n-b_n$.
Conversely, assume that $I(C)$ is finitely generated.
Then any set of generators contains a finite set of generators.
As $I(C)$ is generated by $\{ a-b:a\sim_Cb\}$, there are $a_1,\dots,a_n, b_1,\dots b_n\in A$ such that $I(C)$ is generated by $a_1-b_1,\dots,a_n-b_n$.
Then $C$ is generated by $a_1\sim b_1,\dots,a_n\sim b_n$.
\end{proof}

\begin{definition}
\begin{enumerate}[\rm (a)]
\item A morphism $\ph:A\to B$ of sesquiads is called \e{finite}, resp. \e{of finite type} if $R_B$ is finite resp. of finite type over $R_A$. 
\item A morphism of sesquiads $f:A\to B$ is called \e{finitely presented}, if the ring homomorphism $R_A\to R_B$ is finitely presented.
\end{enumerate}
\end{definition}

\begin{definition}
A sesquiad $A$ is called \e{simple} if every sesquiad morphism $A\to B$ with $B\ne 0$ is injective.
\end{definition}

This is equivalent to 
$$
\spec_c(A)=\{\Delta\},
$$
or to
$A$ being integral and $\spec_cA$ consisting of a single point.

\begin{examples}
\item A ring is a simple sesquiad if and only if it is a field.
\item $\F_1=\{ 0,1\}$ with the trivial addition is simple.
\item $A=\{0,1,-1\}\subset \F_p$ for a prime $p\ge 3$ is simple.
\item Any sesquiad $A$ such that $R_A$ is a field, is simple.
\end{examples}

\begin{definition}
A congruence $C$ is called a \e{maximal congruence}, if it is maximal in the set of all congruences with $0\nsim 1$.
Every maximal congruence is prime. A congruence $C$ is maximal if and only if the quotient $A/C$ is simple.

Let $E$ be a prime congruence of the sesquiad $A$.
The  sesquiad
$$
\kappa(E)= A_E/E_E
$$
is called the \e{residue sesquiad} of $E$.
\end{definition}

\begin{lemma}\label{lem4.5}
Let $A$ be a sesquiad and $C$ a congruence on $A$.
Then $C$ is maximal if and only if $A/C$ is simple.
\end{lemma}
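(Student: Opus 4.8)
The plan is to exploit the correspondence between sesquiad morphisms out of the quotient $A/C$ and congruences on $A$ containing $C$, together with the observation that injectivity is detected by the kernel congruence. I write $\Delta$ for the trivial (diagonal) congruence, and I recall two facts that do the work. First, a sesquiad morphism $\varphi$ is injective exactly when its kernel congruence $\ker_c(\varphi)=\{(a,b):\varphi(a)=\varphi(b)\}$ equals $\Delta$, since every morphism factors as $A\to A/\ker_c(\varphi)\hookrightarrow B$ with the second arrow injective. Second, congruences $E$ on $A$ with $E\supseteq C$ correspond bijectively to congruences on $A/C$, with $A/E$ recovered as the associated quotient of $A/C$ and with $C$ itself corresponding to $\Delta$; under this dictionary the condition $0\nsim_E 1$ corresponds precisely to the quotient being a nonzero sesquiad, because a sesquiad is zero exactly when $0\sim 1$ in it.

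For the implication ``$A/C$ simple $\Rightarrow C$ maximal'' I would argue by contraposition. If $C$ is not maximal, choose a congruence $E\supsetneq C$ with $0\nsim_E 1$. The canonical projection $\pi:A/C\to A/E$ is then a morphism onto a nonzero sesquiad, but because $E$ strictly contains $C$ there are $a,b$ with $a\sim_E b$ and $a\nsim_C b$, so $\pi$ identifies two distinct elements of $A/C$ and is not injective. This contradicts simplicity of $A/C$.

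For the converse, assume $C$ is maximal and let $\varphi:A/C\to B$ be any morphism with $B\neq 0$. Pulling the kernel congruence of $\varphi$ back along $A\to A/C$ gives a congruence $E$ on $A$ with $E\supseteq C$, and since $\varphi(0)=0_B\neq 1_B=\varphi(1)$ (using $1\in A$ and $B\neq 0$) we have $0\nsim_E 1$. Maximality of $C$ now forces $E=C$, which says exactly that $\varphi$ separates distinct elements of $A/C$; hence $\varphi$ is injective and $A/C$ is simple.

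The main obstacle I anticipate is not the logical skeleton, which is the standard interplay between a maximal object and a simple quotient, but the bookkeeping around the correspondence theorem for congruences and the handling of the partial addition. Specifically, one must verify that the kernel of a morphism out of a sesquiad is genuinely a congruence, i.e. compatible with the partially defined sums and not merely with multiplication, and that the translation between ``$B\neq 0$'' and ``$0\nsim 1$'' is legitimate, i.e. that morphisms preserve $0$ and $1$ and that a sesquiad vanishes precisely when these coincide. These structural points presumably rest on the results of \cite{congruence}, and I would cite them rather than reprove them.
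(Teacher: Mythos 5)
Your proof is correct and follows essentially the same route as the paper: the paper's entire proof is the one-line observation that any congruence $E$ on $A/C$ induces a congruence $D\supset C$ via $D=\ker_c(A\to (A/C)/E)$, which is exactly the correspondence you build both directions on. You have simply made explicit the bookkeeping (kernel congruences, the dictionary between $0\nsim 1$ and $B\neq 0$) that the paper leaves to the reader.
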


\begin{proof}
Any given congruence $E$ of $A/C$ induces a congruence $D\supset C$ by $D=\ker_c(A\to (A/C)/E)$.
\end{proof}

\begin{example}
Let $A=\{0,1,\tau,\tau^2,\dots\}$ be the free monoid generated by one element.
Then $R_A$ is the polynomial ring $\Z[X]$ and $A_\Delta$ is the submonoid of the quotient field $\Q(X)$ generated by $X,X^{-1}$ and $\pm(1-X^k)^{-1}$, where $k\in\N$.
So each non-zero element of $A_\Delta$ is of the form $X^k$ for some $k\in\Z$ or $\pm\frac{X^k}{(1-X^{l_1})\cdots (1-X^{l_s})}$ with $s,l_1,\dots,l_s\in\N$.
\end{example}

\begin{definition}
A morphism $f:X\to Y$ of congruence schemes is said to be \e{of finite presentation at $x\in X$}, if there exists an open affine neighborhood $U\subset X$ of $x$ and an open affine neighborhood $V\subset Y$ of $f(U)$ such that the ensuing morphism $\CO_Y(V)\to \CO_X(U)$ is of finite presentation.
The Morphism $f$ is \e{locally of finite presentation}, if it is finitely presented at all points of $X$.
\end{definition}

\begin{definition}
A \e{subsesquiad} $A\subset B$ is a subset which is a sesquiad with the structures of $B$.
In particular, the universal ring $R_A$ then is a subring of $R_B$.
We also say that $B$ is an \e{extension} of $A$ and write this as $B/A$.
We say the extension is \e{finite}, \e{of finite type} or \e{of finite presentation}, if the inclusion morphism $A\hookrightarrow B$ has the corresponding property.

Note that for an injective  sesquiad morphism $\ph:A\hookrightarrow B$, the image is a susbsesquiad, which is not necessarily isomorphic to $A$.
This is not even so for simple sesquiads as the example $A=\{ 0,1\}$ with trivial addition and $B=\F_2$ shows.

A subsesquiad $A\subset B$ of a sesquiad $B$ is called a \e{full subsesquiad}, if 
$$
A=R_A\cap B.
$$
\end{definition}

\begin{example}
Let $A$ be a sesquiad with $A\ne R_A$ and let $B=R_A$.
Then $A$ is a subsesquiad of $B$ which is not full.
\end{example}

\begin{proposition}
Let $A$ be a simple sesquiad. Then $A^\times \subset A\sm\{0\}\subset R_A^\times$ and both in inclusions are strict in general.
\end{proposition}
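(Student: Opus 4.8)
The plan is to treat the two inclusions separately: the first is essentially formal, while the second is the real content and should rest on the definition of simplicity together with the congruence--ideal dictionary recorded above. For the first inclusion $A^\times\subset A\sm\{0\}$ I would simply note that a simple sesquiad is integral, so in particular $0\ne 1$ in $A$; hence $0$ cannot be a unit, since an equation $0\cdot u=1$ would force $0=1$. Thus every unit lies in $A\sm\{0\}$, and this part needs nothing beyond integrality.

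For the substantive inclusion $A\sm\{0\}\subset R_A^\times$, fix $a\in A$ with $a\ne 0$. I would form the congruence $C$ generated by the single relation $a\sim 0$ and consider the quotient morphism $\pi\colon A\to A/C$. Since $\pi(a)=\pi(0)$ while $a\ne 0$, the map $\pi$ fails to be injective, so simplicity of $A$ forces the target to be zero, i.e. $A/C=0$, which means $1\sim_C 0$. Now I translate this through the correspondence between congruences and ideals of $R_A$: because $C$ is generated by $a\sim 0$, the ideal $I(C)$ is generated by $a-0=a$, so $I(C)=(a)$; and $1\sim_C 0$ says $1=1-0\in I(C)=(a)$. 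Therefore $1=ra$ for some $r\in R_A$, i.e. $a\in R_A^\times$, as desired.

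The step I expect to be the main obstacle is exactly this passage from ``$A/C=0$'' to ``$1\in(a)\subset R_A$'': one must be sure that the congruence generated by $a\sim 0$ corresponds to the principal ideal $(a)$ (which the finitely generated congruence/ideal dictionary supplies, since a single generating relation $a\sim b$ yields the ideal generated by $a-b$) and that the quotient sesquiad $A/C$ vanishes precisely when $1\sim_C 0$, i.e. when $1\in I(C)$. Everything else is bookkeeping with the quotient construction and the defining property of simplicity.

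Finally, for strictness I would exhibit examples. For the second inclusion, $\F_1=\{0,1\}$ with trivial addition has universal ring $R_{\F_1}=\Z$, so $-1\in R_A^\times$ while $A\sm\{0\}=\{1\}$, and the inclusion is strict. For the first inclusion I would use the residue sesquiad $\kappa(\Delta)=A_\Delta$ of the free monoid $A=\{0,1,\tau,\dots\}$ computed in the example above, whose universal ring is the field $\Q(X)$, so it is simple: here $(1-X)^{-1}$ lies in $A_\Delta\sm\{0\}$, but its multiplicative inverse $1-X$ is not of the listed form and hence does not belong to $A_\Delta$, so $(1-X)^{-1}\notin A_\Delta^\times$. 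This same sesquiad also witnesses strictness of the second inclusion, since for instance $2\in\Q(X)^\times=R_{A_\Delta}^\times$ is not an element of $A_\Delta$.
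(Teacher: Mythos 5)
Your treatment of the two inclusions is essentially correct. For the second inclusion you go through the congruence $C$ generated by $a\sim 0$, the dictionary $I(C)=(a)$, and the observation that $A/C=0$ iff $1\sim_C 0$ iff $1\in I(C)$; the paper makes the same point more directly: for a non-unit $a\in A\sm\{0\}$ the map $A\to R_A/aR_A$ is a non-injective sesquiad morphism onto a nonzero sesquiad, contradicting simplicity. These are the same idea in different packaging. Your $\F_1$ example for strictness of the second inclusion is also valid: $R_{\F_1}=\Z$ and $-1\in\Z^\times$ is not in $A\sm\{0\}=\{1\}$.

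The genuine gap is your example for strictness of the \emph{first} inclusion. The sesquiad $A_\Delta$ of the free monoid is not simple, and its universal ring is not $\Q(X)$. By definition, $R_{A_\Delta}$ is the subring of $\Q(X)$ generated by $A_\Delta$, namely $\Z[X^{\pm 1},(1-X^k)^{-1}:k\in\N]$, and this is far from a field: for instance $2$ is not invertible in it (an identity $2g(X)=\pm X^a(1-X^{k_1})\cdots(1-X^{k_s})$ with $g\in\Z[X]$ is impossible, since by Gauss's lemma the right-hand side is primitive while the left-hand side has content divisible by $2$). Worse, $A_\Delta$ fails simplicity outright: evaluation at $X=2$ defines a ring homomorphism $R_{A_\Delta}\to\Q$ (every inverted element $X$, $1-X^k$ has nonzero value at $2$), hence a sesquiad morphism from $A_\Delta$ onto a nonzero subsesquiad of $\Q$, and it is not injective because $X\mapsto 2$ while also $-X(1-X)^{-1}\mapsto -2/(1-2)=2$, although $X\ne -X/(1-X)$ in $A_\Delta$. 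So your witness $(1-X)^{-1}$ lives in a non-simple sesquiad and proves nothing about the proposition; the same defect invalidates the claim $2\in R_{A_\Delta}^\times$. The paper's example repairs both strictness claims at once: let $A$ be the sesquiad of all rationals $p/q$ with $p=0$ or $p$ odd. Then $1/n\in A$ for every $n\ne 0$, so $R_A=\Q$ is a field and $A$ is simple; here $1/2\in A\sm\{0\}$ has inverse $2\notin A$, so $A^\times\subsetneq A\sm\{0\}$, and $2\in\Q^\times\sm A$ shows $A\sm\{0\}\subsetneq R_A^\times$.
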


\begin{proof}
The only non-trivial part concerns the second inclusion.
So let $A$ be a sesquiad and  let $a\in A\cap\(R_A\sm R_A^\times\)$. Then the map $A\to R_A/aR_A$ is not injective, therefore $A$ is not simple. This proves the second inclusion.
A counterexample to ``$=$" is given by the sesquiad $A$ of all rational numbers $\frac pq$ with $p=0$ or $p$ odd.
Then $R_A=\Q$ is a field, therefore $A$ is simple.
\end{proof}

\begin{definition}
Let $A$ be a sesquiad. A \e{polynomial over $A$} is a polynomial $p(X)\in R_A[X]$ whose coefficients all lie in $A$.
Let $B/A$ be an extension of sesquiads.
An element $b\in B$ is \e{algebraic} over $A$, if there exists a non-constant polynomial $p$ over $A$, such that   $p(b)=0$ holds in $R_B$.

In that case, division with remainder shows that there exists a unique polynomial $q(X)$ in $R_B[X]$ such that $p(X)=(X-b)q(X)$.
We say that $b$ is \e{separable over $A$}, if we can choose $p(X)$ in a way that $q(b)\ne 0$.
The extension $B/A$ is called \e{separable}, if every algebraic element is separable.

Note that if $A$ is simple, every algebraic element of $B$ is integral over $R_A$.
\end{definition}

\begin{proposition}Let $A$ be a sesquiad.
\begin{enumerate}[\rm (a)]
\item $A$ is simple if and only if the natural map $R_A\to R_{A_\Delta}$ is bijective.
\item If $A$ is simple and $p$ is a polynomial over $A$ of degree $n\ge 1$, then the equation $p(x)=0$ has at most $n$ solutions in $A$.
\end{enumerate}
\end{proposition}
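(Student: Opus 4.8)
The plan is to isolate the following strengthening of the preceding proposition as the engine of both parts: \emph{$A$ is simple if and only if for all $a\neq b$ in $A$ the difference $a-b$ is a unit in $R_A$.} I would prove this exactly as the second inclusion was handled above. For the forward direction, if $a\neq b$ in $A$ and $a-b$ were a non-unit, then $a-b\neq 0$ in $R_A$ (recall $A\hookrightarrow R_A$) yet $(a-b)R_A\neq R_A$, so the composite $A\to R_A\to R_A/(a-b)R_A$ is a morphism to a nonzero ring that identifies $a$ with $b$, contradicting simplicity. For the converse, if some morphism $\varphi\colon A\to B$ with $B\neq 0$ failed to be injective, say $\varphi(a)=\varphi(b)$ with $a\neq b$, then the induced ring map would send the unit $a-b$ to $0$ in $R_B\neq 0$, which is impossible; hence $A$ is simple. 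Note this also recovers ``simple $\Rightarrow$ integral'', since a zero divisor $a$ gives a non-unit difference $a-0$, so the lemma handles the non-integral case uniformly.

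For part (a) I would identify the natural map $\eta\colon R_A\to R_{A_\Delta}$ with a localization of $R_A$. Since $A_\Delta$ is the localization of $A$ at the trivial congruence, $R_{A_\Delta}$ is obtained from $R_A$ by inverting the multiplicative set $S$ generated by the nonzero differences $a-b$ with $a,b\in A$; one checks this against the free-monoid example, where $S=\langle X^i-X^j\rangle=\langle X,\,1-X^k\rangle$ reproduces the generators $X^{\pm1},\pm(1-X^k)^{-1}$ of $A_\Delta\subset\Q(X)$. A localization map $R_A\to S^{-1}R_A$ is an isomorphism precisely when $S\subset R_A^\times$, i.e.\ when every nonzero difference of elements of $A$ is a unit; by the lemma this is equivalent to $A$ being simple. (For the forward implication one may instead argue directly: $A$ simple gives $\spec_c A=\{\Delta\}$, so localization at the unique point changes nothing, $A_\Delta=A$, whence $\eta$ is an isomorphism.)

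For part (b), let $x_1,\dots,x_m\in A$ be the distinct solutions of $p(x)=0$ and induct on $n=\deg p$. Division of $p$ by the monic polynomial $X-x_1$ over $R_A$ gives $p(X)=(X-x_1)q(X)$ with $q\in R_A[X]$ of degree $n-1$ and leading coefficient equal to that of $p$, hence nonzero. For $i\geq 2$ we get $0=p(x_i)=(x_i-x_1)q(x_i)$, and since $x_i\neq x_1$ the factor $x_i-x_1$ is a unit by the lemma, forcing $q(x_i)=0$. Thus $x_2,\dots,x_m$ are $m-1$ distinct roots in $A$ of the degree-$(n-1)$ polynomial $q$, and the base of the induction is that a nonzero constant has no roots. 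Induction gives $m-1\leq n-1$, hence $m\leq n$.

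The routine ingredients are the lemma and the factor-theorem induction of (b); the one step demanding genuine input from the congruence-scheme construction is the identification in (a) of the inverted multiplicative set of $\eta$ with the differences of elements of $A$ (equivalently, that localizing a simple sesquiad at its unique point changes nothing). This is where I expect the main obstacle to lie, since it requires unwinding the definition of $A_\Delta$ and the compatibility of the universal-ring functor $A\mapsto R_A$ with localization, rather than any further homological input.
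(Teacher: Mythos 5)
Your proof is correct and follows essentially the same route as the paper: the paper's own proof likewise reduces everything to the fact that simplicity is equivalent to every difference $a-b$ of distinct elements of $A$ being a unit in $R_A$ (proved by the same $A\to R_A/(a-b)R_A$ contradiction), silently invokes the same identification of $R_{A_\Delta}$ with the localization of $R_A$ at nonzero differences in its step ``therefore $R_A=R_{A_\Delta}$,'' and proves (b) by the identical invertible-difference/factor-theorem iteration. The only flaw is your parenthetical alternative for the forward direction of (a): it is not true that $A_\Delta=A$ for simple $A$ --- inverses of differences are adjoined to the monoid under localization, so e.g.\ for $A=\{0,1,-1\}\subset\F_5$ one has $(1-(-1))^{-1}=3\in A_\Delta\setminus A$ --- and only the ring-level equality $R_{A_\Delta}=R_A$ holds, which is exactly what your main argument establishes.
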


\begin{proof}
(a) Let $A$ be simple and let $a,b\in a$ with $0\ne a\ne b\ne 1$.
We claim that $a-b$ is a unit in $R_A$.
Suppose it's not, then $(a-b)R_A$ is a proper ideal and $A\to R_A/(a-b)R_A$ is not injective, a contradiction. Therefore, $R_A=R_{A_\Delta}$. 
For the converse assume that the natural map $R_A\to\R_{A_\Delta}$ is bijective.
Let $0\ne a\ne b\ne 1$ in $A$ and let $C$ be a congruence on $A$ such that $a\sim_C b$. Then the ideal $I(C)$ contains $a-b$, which is a unit in $R_A$, hence $R_{A/C}=R_A/I(C)=0$, so $C$ is the trivial congruence.

(b) Let $a$ be a solution of $p(x)=0$.
By polynomial division, we find $p(x)=(x-a)q(x)$ for some polynomial $q\in R_A[x]$ of one degree less. If $a'$ is a second solution, then $(a'-a)q(a')=0$ leads to $q(a')=0$ as $(a'-a)$ i invertible in $R_{A_\Delta}$.
Therefore, given $n$ distinct solutions $a_1,\dots a_n$ of $p(x)=0$, we can iterate the process to get $p(x)=\al (x-a_1)\cdots (x-a_n)$.
Let now $a$ be any solution, then $0=\al (a-a_1)\cdots(a-a_n)$.
If $a\ne a_j$ for all $j$, then this product is invertible in $R_{A_\Delta}$, a contradiction. 
\end{proof}

\begin{definition}
We say that a sesquiad $A$ is \e{algebraically closed}, if every non-constant polynomial $p$ over $A$ has a zero in $A$.

Note that every simple sesquiad $A$ can be mapped injectively into an algebraically closed simple sesquiad $B$.
It is, however, not clear whether it can be embedded into such, i.e., if $B$ can be chosen in a way that the ensuing map $R_A\to R_B$ is injective, too.
Also, $B$ is not unique, so there does not exist an "algebraic closure" for sesquiads.
\end{definition}

\begin{lemma}
Let $A$ be a sesquiad and $E\in\spec_cA$. We denote the congruence
$$
\ker\(A_E\to S_E^{-1}R_A/S_E^{-1}I(E)\)
$$
on $A_E$ again by $E$. Then there is a natural isomorphism
$$
A_E/E\cong (A/E)_\Delta.
$$
\end{lemma}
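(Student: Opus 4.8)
The plan is to carry both sesquiads into the single ambient ring $S_E^{-1}R_A/S_E^{-1}I(E)$ and to check that they are cut out by the same generating set, so that the asserted isomorphism becomes an identification inside that ring. Recall that $A_E=S_E^{-1}A$, where $S_E=\{a\in A:a\not\sim_E 0\}$, and that its universal ring is $S_E^{-1}R_A$; by the very definition of the congruence $E$ on $A_E$, the quotient $A_E/E$ is the image of $A_E$ in $S_E^{-1}R_A/S_E^{-1}I(E)$. Since $A$ generates $R_A$, the set $A_E$ generates $S_E^{-1}R_A$, and as the reduction map onto $S_E^{-1}R_A/S_E^{-1}I(E)$ is surjective, this quotient ring is exactly the universal ring of $A_E/E$. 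Thus $A_E/E=\{\ol a/\ol s:a\in A,\ s\in S_E\}$, where $\ol{\,\cdot\,}$ denotes reduction modulo $S_E^{-1}I(E)$.

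Next I would invoke the standard fact that localization commutes with passage to a quotient, giving a canonical ring isomorphism
$$
S_E^{-1}R_A\big/S_E^{-1}I(E)\ \cong\ \ol{S_E}^{-1}\(R_A/I(E)\)\= \ol{S_E}^{-1}R_{A/E},
$$
where $\ol{S_E}$ is the image of $S_E$ in $R_{A/E}=R_A/I(E)$. The \textbf{main point}, and the step I expect to carry the real content, is to identify $\ol{S_E}$ with the set of nonzero elements of $A/E$: because $E$ is prime the quotient $A/E$ is integral, so $(A/E)\sm\{0\}$ is multiplicatively closed, and because $S_E$ consists of exactly those $a$ with $a\not\sim_E 0$, its image is precisely $(A/E)\sm\{0\}$. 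Primeness of $E$ is what guarantees both that no element of $S_E$ dies in $A/E$ and that the resulting set is a legitimate multiplicative set.

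Finally I would recognize the right-hand side as $(A/E)_\Delta$. Writing $B=A/E$, localization at the generic congruence $\Delta$ inverts exactly the elements $b$ with $b\not\sim_\Delta 0$, that is, all nonzero elements; hence $(A/E)_\Delta=(B\sm\{0\})^{-1}B=\ol{S_E}^{-1}B$, with universal ring $\ol{S_E}^{-1}R_B=\ol{S_E}^{-1}R_{A/E}$. Combining the two descriptions, $A_E/E$ and $(A/E)_\Delta$ have the same universal ring, and their generating sets are both $\{\ol a/\ol s:a\in A,\ s\in S_E\}$ inside it; so they coincide as subsesquiads of that ring and are in particular canonically isomorphic. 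Since only the canonical localization and quotient maps enter, the isomorphism is natural. Beyond the identification of $\ol{S_E}$ in the second paragraph, the remaining verifications — that $A_E$ generates the localized ring and that the two generating sets match — are routine bookkeeping.
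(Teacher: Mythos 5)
You should know at the outset that the paper does not actually prove this lemma: its entire ``proof'' is the citation to Lemma 3.1.2 of \cite{congruence}. So there is no in-paper argument to compare against line by line; what you have written is a self-contained proof, and its architecture is the natural one (almost certainly the mechanism of the cited reference as well): realize $A_E/E$ inside $S_E^{-1}R_A/S_E^{-1}I(E)$, invoke the standard isomorphism $S_E^{-1}R_A/S_E^{-1}I(E)\cong \ol{S_E}^{-1}\bigl(R_A/I(E)\bigr)$ because localization commutes with quotients, identify $\ol{S_E}$ with $(A/E)\sm\{0\}$, recognize the right-hand side as the localization defining $(A/E)_\Delta$, and conclude that both sesquiads are the same generating submonoid of the same universal ring. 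Where the paper outsources the content, you supply it; that is a genuine gain in self-containedness.

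Two of your justifications are softer than the rest and should be repaired, although both are one-line fixes rather than obstructions. First, the inference ``$A_E$ generates $S_E^{-1}R_A$, hence the quotient ring is exactly the universal ring of $A_E/E$'' conflates two different things. Generation does show that the displayed quotient ring is universal for the \emph{image} of $A_E$ in it (for any pair in which the monoid generates the ring, that ring is automatically the universal one), but it does not by itself identify this image pair with the quotient sesquiad $A_E/E$ as the framework defines it, namely the image of $A_E$ in $R_{A_E}/I(E_{A_E})$, where $I(E_{A_E})$ is the ideal generated by differences of $E$-equivalent elements of $A_E$. What is needed is the ideal identity $I(E_{A_E})=S_E^{-1}I(E)$: one inclusion holds by definition of the kernel congruence, and the other because $S_E^{-1}I(E)$ is generated by the elements $(a-b)/1$ with $a\sim_E b$ in $A$, each of which is a difference of $E$-equivalent elements of $A_E$. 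Second, your claim that primeness of $E$ ``guarantees that no element of $S_E$ dies in $A/E$'' misattributes the reason: that statement is the saturation property $\ker_c(A\to A/E)=E$, which holds for \emph{every} congruence in this framework precisely because congruences are kernels of sesquiad morphisms. Primeness enters only to make $S_E$, equivalently $(A/E)\sm\{0\}$, multiplicatively closed, so that both localizations exist and $\Delta$ is a prime congruence on $A/E$. With these two points patched, your proof is complete and correct, and the naturality remark at the end is fine as stated.
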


\begin{proof} This is Lemma 3.1.2 of \cite{congruence}.
\end{proof}

\begin{definition}
A morphism $f:X\to Y$ of congruence schemes is called \e{unramified}, if it is locally of finite presentation and for every $x\in X$ the induced map
$$
\CO_{Y,f(x)}/E_{f(x)}\to \CO_{X,x}/E_x
$$
is a finite separable embedding.

The morphism $f$ is called \e{etale}, if it is flat and unramified.
\end{definition}

Note that in particular, a morphism of sesquiads $\ph:A\to B$ is  etale if and only if
\begin{enumerate}[\rm (a)]
\item $R_B$ is a flat, finitely presented $R_A$-algebra, and
\item all localizations $A_{\ph^*E}\to B_E$, $E\in\spec_cB$ are unramified, which means that
$$
A_{\ph^*E}/\ph^*E\to B_E/E
$$
is a finite separable embedding.
\end{enumerate}

\section{Sheaf cohomology}
Recall the notion of a congruence scheme \cite{congruence}.
This is a topological space $X$ together with a sheaf $\CO_X$ of sesquiads which is locally of the form $(\spec_cA,\CO_A)$ for sesquiads $A$.

\begin{definition}
We define a \e{module sheaf} to be a sheaf $\CF$ of pointed sets together with the structure of an $\CO_X(U)$-module on the set $\CF(U)$ for each open set $U\subset X$ such that for any open sets $U\subset V\subset X$ the restriction morphism $\CF(V)\to \CF(U)$ is compatible with the $\CO_X(V)$ and $\CO_X(U)$ module structures.
More precisely, for $m\in \CF(V)$ and $a\in\CO_X(V)$ we insist that $(am)|_U=a|_Um|_U$. 
A \e{morphism} $\ph:\CF\to\CG$ of module sheaves is a morphism of sheaves of pointed sets compatible with the module structures.
It induces module homomorphisms on the stalks $\ph_x:\CF_x\to\CG_x$, $x\in X$.

We say that the morphism $\ph$ is \e{full}, if for every open $U\subset X$ the morphism of modules 
 $\ph_U:\CF(U)\to\CG(U)$ is a full morphism.
\end{definition}

\begin{lemma}
A morphism $\ph:\CF\to\CG$ is full if and only if all stalks $\ph_x:\CF_x\to\CG_x$, $x\in X$ are full.
\end{lemma}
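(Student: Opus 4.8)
The plan is to recast everything in terms of two sub-presheaves of $\CG$: the \emph{image presheaf} $P$ with $P(U)=\ph_U(\CF(U))$, and the \emph{full-closure presheaf} $Q$ with $Q(U)=\ol{\ph_U(\CF(U))}^\full=M_{\ph_U}(M_{\CF(U)})\cap\CG(U)$, where I use that the universal module of the image submodule is $M_{\ph_U(\CF(U))}=M_{\ph_U}(M_{\CF(U)})$. One always has $P(U)\subset Q(U)$, and by the very definition of fullness the map $\ph_U$ is full exactly when $P(U)=Q(U)$. Thus ``$\ph$ is full'' means $P=Q$ as presheaves, while ``all stalks are full'' means $P_x=Q_x$ for every $x$, and the lemma becomes a local-to-global comparison of the nested presheaves $P\subset Q\subset\CG$.

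First I would pin down the stalks. The stalks are the filtered colimits $\CF_x=\varinjlim_{U\ni x}\CF(U)$ and $\CG_x=\varinjlim_{U\ni x}\CG(U)$, and since forming the universal module $S\mapsto M_S$ is a left adjoint it commutes with these colimits, giving $M_{\CF_x}=\varinjlim M_{\CF(U)}$. Because filtered colimits are exact and commute with the formation of images and of finite intersections, this yields $P_x=\ph_x(\CF_x)$ and $Q_x=M_{\ph_x}(M_{\CF_x})\cap\CG_x$; that is, the stalk of $P$ (resp. $Q$) is exactly the image (resp. full closure) of $\ph_x$. The forward direction is then immediate: if $P=Q$ then $P_x=Q_x$, i.e. $\ph_x(\CF_x)=M_{\ph_x}(M_{\CF_x})\cap\CG_x$, which is precisely fullness of $\ph_x$.

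For the converse I would produce global preimages. Fix an open $U$ and $t\in Q(U)$, so $t\in\CG(U)$ is an $R_A$-combination of elements of $\ph_U(\CF(U))$ inside $M_{\CG(U)}$. Passing to germs, the stalk-fullness hypothesis gives $t_x\in Q_x=\ph_x(\CF_x)$ for every $x\in U$, so after shrinking there is an open cover $\{V_i\}$ of $U$ together with sections $s_i\in\CF(V_i)$ satisfying $\ph_{V_i}(s_i)=t|_{V_i}$. If the $s_i$ agreed on overlaps they would glue, by the sheaf axiom for $\CF$, to some $s\in\CF(U)$ with $\ph_U(s)=t$, whence $t\in P(U)$ and $P(U)=Q(U)$.

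The main obstacle is exactly this gluing. On $V_i\cap V_j$ the two lifts satisfy $\ph(s_i)=\ph(s_j)=t$, but since $\ph$ need not be a monomorphism they need not coincide: all one knows is $s_i-s_j\in M_{\ker\ph}$, so the local lifts carry a genuine $1$-cocycle obstruction valued in the kernel. To control it I would avoid gluing arbitrary lifts and instead exploit the characterization of full submodules as kernels (Lemma \ref{alem2.1}(b)): the full closure $Q$ agrees stalkwise with the kernel of the canonical map $\CG\to\CC$ onto the cokernel sheaf, and such a kernel is a genuine subsheaf of $\CG$; this realizes $Q$ stalkwise inside an honest subsheaf with $Q_x=P_x$ and reduces the task to showing that a section of $\CG$ whose germs all lie in $P$ already lies in $P(U)$. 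Verifying that the local lifts can always be chosen compatibly — equivalently, that the nested presheaves $P\subset Q$ with equal stalks actually coincide — is the delicate heart of the proof, and it is here that the sheaf axioms for $\CF$ and $\CG$ must be combined with the exactness and kernel structure supplied by fullness.
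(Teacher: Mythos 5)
Your forward direction is correct and is essentially the paper's argument: a germ in $M_{\ph_x(\CF_x)}\cap\CG_x$ is represented, on a small enough open set, by a section of $M_{\ph_U(\CF(U))}\cap\CG(U)$, and fullness of $\ph_U$ finishes. The gap is the converse, which you yourself leave open (``the delicate heart of the proof''), and the reduction you sketch there does not work. After replacing $Q$ by the kernel subsheaf of $\CG\to\CC$ and using $Q_x=P_x$, the only hypothesis about $t$ you retain is that its germs lie in the stalks of the image presheaf $P$, and you then want ``germs in $P$ implies membership in $P(U)$''. That statement is false in the setting of this paper. Take an ordinary scheme viewed as a congruence scheme, and quasicoherent sheaves viewed as module sheaves with $S=M_S$ throughout; then every morphism is automatically full (here $M_{P(U)}=P(U)$, so $Q(U)=P(U)$), and so are all stalks, but the image presheaf of a stalkwise surjective morphism need not be a sheaf: for $\CO(-1)\oplus\CO(-1)\to\CO$ on $\mathbb{P}^1$ given by $(f,g)\mapsto xf+yg$, the section $1\in\CO(\mathbb{P}^1)$ has all its germs in the image yet is not in the image of global sections. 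The lemma survives in this example only because such a section does not lie in $Q(U)$ either; in other words, the one hypothesis your reduction discards --- that $t\in Q(U)$, i.e.\ $t=\sum_j k_j s_j$ holds \emph{globally} with $k_j\in\Z$ and $s_j\in\ph_U(\CF(U))$ --- is exactly what any correct proof must exploit.

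The paper's converse is built around that hypothesis, and in doing so it sidesteps the cocycle obstruction you identified rather than confronting it. Write $s=\sum_j k_j\ph_U(t_j)$ with globally defined $t_j\in\CF(U)$, and form the single global section $m=\sum_j k_j t_j$ of $M_\CF(U)$. Stalkwise fullness is then used only to show that $m|_{U_i}\in\CF(U_i)+\ker M_\ph(U_i)$ for a suitable cover of $U$; since $V\mapsto\CF(V)+\ker M_\ph(V)$ is a subsheaf of $M_\CF$, this local membership of one fixed global section gives $m\in\CF(U)+\ker M_\ph(U)$, and applying $M_\ph$ yields $s\in\ph_U(\CF(U))$. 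No sections of $\CF$ are ever glued --- the lifts $t_j$ are global from the start, and the sheaf axiom enters only as a local membership criterion for a sub-presheaf of $M_\CF$, not as a gluing of locally chosen lifts of $t$ --- and this transfer of the problem from $\CF$ to $M_\CF$ is precisely the maneuver missing from your sketch.
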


\begin{proof}
Suppose $\ph$ is full and let $x\in X$.
We have to show that $M_{\ph_x(\CF_x)}\cap\CG_x$ is a subset of $\ph_x(\CF_x)$.
So let $s_x$ be an element of $M_{\ph_x(\CF_x)}\cap\CG_x$.
There exists an open neighborhood $U$ of $x$ such that $s_x$ in induced by some $s\in M_{\ph_U(\CF(U))}\cap\CG(U)$.
As $\ph_U$ is full, we have $s\in \ph_U(\CF(U))$, inducing an element of $\ph_x(\CF_x)$.

For the converse direction suppose that all stalks of $\ph$ are full and let $s\in M_{\ph_U(\CF(U))}\cap\CG(U)$ for some open set $U\subset X$.
Since all stalks are full, there is a covering $U=\bigcup_iU_i$ by open sets $U_i$ such that $s|_{U_i}\in \ph_{U_i}(\CF(U_i))$.
On the other hand, as $s\in M_{\ph_U(\CF(U))}$, there are $k_j\in\Z$ and $s_j\in\ph_U(\CF(U))$ such that $s=\sum_{j=1}^nk_js_j$.
Fix $t_j\in \CF(U)$ with $s_j=\ph_U(t_j)$.
It follows that $\sum_{j=1}^nk_jt_j|_{U_i}\in\CF(U_i)+\ker M_{\ph}(U_i)$.
The sum on the right hand side represents a sheaf, therefore $\sum_{j=1}^nk_jt_j\in\CF(U)+\ker M_\ph(U)$ and applying $M_\ph$ we get $s=\sum_{j=1}^nk_js_j\in\ph_U(\CF(U)$.
\end{proof}

Let $\Mod_\full(X)$ denote the category of $\CO_X$-modules and full morphisms.

\begin{proposition}
The category $\Mod_\full(X)$ is belian and contains enough injectives.
\end{proposition}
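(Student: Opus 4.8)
The plan is to check the defining properties of a belian category for $\Mod_\full(X)$ one at a time — pointedness, kernels, cokernels, finite products, balancedness, and the axiom that a morphism with zero cokernel is epic — reducing each to the corresponding statement for the stalk categories $\Mod_\full(\CO_{X,x})$. These are belian by Proposition~\ref{prop1.8}, and the bridge between local and global data is the preceding Lemma, which identifies full morphisms with those that are full on every stalk. The category is pointed by the zero sheaf, and finite products are formed section-wise via $U\mapsto\CF(U)\times\CG(U)$, which is again a module sheaf. For a full morphism $\ph\colon\CF\to\CG$ I would take the kernel to be the subsheaf $U\mapsto\ker\ph_U$: the kernel presheaf of a sheaf morphism is automatically a sheaf, its stalk at $x$ is $\ker\ph_x$, and the inclusion is full on stalks because kernels of full morphisms are full, hence full by the stalk Lemma. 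The cokernel I would define as the sheafification of $U\mapsto\CG(U)/\ph_U(\CF(U))$; since sheafification preserves stalks, its stalk at $x$ is $\coker\ph_x$, and the canonical projection $\CG\to\coker\ph$ is a surjection on every stalk, hence full.

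With kernels and cokernels in hand, the two remaining categorical axioms follow formally from the module-level results of Section~1. To see that $\Mod_\full(X)$ is balanced, let $\ph$ be full, monic and epic. Monicity gives $\ker\ph=0$, so $\ker\ph_x=0$ and $M_{\ph_x}$ is injective for all $x$ by Proposition~\ref{lem1.3}(b). Epicity gives $\coker\ph=0$, so $M_{\ph_x}$ is surjective; as $\ph_x$ is full this upgrades to surjectivity of $\ph_x$ itself on the underlying sesquiad module. By Proposition~\ref{lem1.3}(a) each $\ph_x$ is then an isomorphism, hence $\ph$ is an isomorphism of sheaves whose inverse is full by the stalk Lemma, and therefore an isomorphism in $\Mod_\full(X)$. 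The same computation settles the last axiom: if $\coker\ph=0$ then every $\ph_x$ is surjective, so $\ph$ is a stalk-wise surjection of sheaves and hence epic.

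For enough injectives I would imitate the affine argument of Proposition~\ref{prop1.8} through a Godement-type construction. For each $x$ choose an embedding of the stalk $\CF_x$ into an injective $I_x$ of $\Mod_\full(\CO_{X,x})$; by Lemma~\ref{lem1.9} one has $I_x=M_{I_x}$ with $M_{I_x}$ an injective $\CO_{X,x}$-module. Assembling the adjoints of the maps $\CF_x\to I_x$ to the skyscraper sheaves $(i_x)_*I_x$ produces a morphism $\CF\to\CI$ into the product $\CI=\prod_x(i_x)_*I_x$ that is injective on every stalk, hence a monomorphism. That $\CI$ is injective should follow because the stalk functor $i_x^*$ is exact and left adjoint to $(i_x)_*$, so $(i_x)_*$ carries injectives to injectives, and a product of injectives is injective.

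The main obstacle is the interaction of fullness with the two sheaf-theoretic constructions that do not restrict section-wise. First, for the cokernel one must confirm that sheafification is compatible both with the stalk-wise module cokernels and with fullness, so that the quotient morphism genuinely lands in $\Mod_\full(X)$; this is precisely where the preceding stalk Lemma does the essential work. Second, and more delicate, is checking that the embedding $\CF\to\CI$ meets the notion of enough injectives adopted in \cite{belian}: the embeddings into injectives interact subtly with fullness already in the affine case, so I expect the compatibility here to require the same care, resolved through the characterization of injective objects in Lemma~\ref{lem1.9} rather than by a naive section-wise inclusion.
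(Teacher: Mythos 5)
Your overall strategy---reducing to the stalk categories $\Mod_\full(\CO_{X,x})$ via the preceding stalk lemma, and building injectives from products of skyscraper sheaves---matches the paper's, and your treatment of kernels, cokernels, products, and of the axiom that zero cokernel implies epic is sound. But your balancedness argument has a genuine gap: you replace the hypothesis ``$\ph$ is monic'' by ``$\ker\ph=0$,'' pass that to stalks, and then invoke Proposition \ref{lem1.3}(b) to conclude that $M_{\ph_x}$ is injective. Proposition \ref{lem1.3}(b) is a statement about \emph{monomorphisms}, and in these categories having zero kernel is strictly weaker than being monic. Concretely, over $A=\F_1$ take $S=\{0,a,b\}$ with $M_S=\Z a\oplus\Z b$, and $T=\{0,c\}$ with $M_T=\Z c$, and let $f(a)=f(b)=c$. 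Then $f$ is full (it is surjective) and $f^{-1}(0)=\{0\}$, so $\ker f=0$; yet $M_f$ is not injective, and $f$ is not monic: the two full morphisms $g,h:(A,\Z)\to(S,M_S)$ with $g(1)=a$ and $h(1)=b$ satisfy $fg=fh$ while $g\ne h$. So from $\ker\ph_x=0$ you cannot conclude that $M_{\ph_x}$ is injective, and your application of Proposition \ref{lem1.3}(a) at the stalks collapses; the epic half of your argument survives only because epic really is equivalent to zero cokernel, and fullness upgrades surjectivity of $M_{\ph_x}$ to surjectivity of $\ph_x$. The asymmetry between the two halves is exactly the failure of ``kernel zero $\Rightarrow$ mono'' in belian, as opposed to abelian, categories.

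The paper avoids this trap by never converting monicity into a kernel condition: it uses skyscraper sheaves to transfer the categorical properties themselves, showing that each stalk $\ph_x$ is monic and epic \emph{as a morphism of} $\Mod_\full(\CO_{X,x})$, and then quotes the balancedness of that category (Proposition \ref{prop1.8}) as a black box before concluding with the standard stalk-isomorphism argument. To repair your proof you must do the same: transfer the property of being a monomorphism to the stalks directly (morphisms into skyscraper sheaves, or testing against morphisms built from sections over small opens), since by Proposition \ref{lem1.3}(b) monicity is equivalent to injectivity of $M_{\ph_x}$, whereas kernel vanishing is not. Your sketch of enough injectives is in the same spirit as the paper's appeal to skyscraper products with injective stalks (via Lemma \ref{lem1.9}), but note that its key step---that $(i_x)_*$ preserves injectivity because the stalk functor preserves monomorphisms---relies on precisely the same mono-transfer statement, so it cannot be obtained from kernel vanishing either.
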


\begin{proof}
The category is pointed by the zero module.
We show that it is balanced.
Let $\ph:\CF\to\CG$ be epic and monic.
Considering skycraper sheaves we find that for every given $x\in X$ the stalk $\ph_x:\CF_x\to\CG_x$ is epic and monic in the category $\Mod_\full(\CO_{X,x})$.
The latter is balanced by Proposition \ref{prop1.8}, therefore the stalk $\ph_x$ is an isomorphism.
By standard arguments (c.f. \cite{Harts},  Proposition II 1.1), $\ph$ is an isomorphism.

The category $\Mod_\full(X)$ contains finite products, kernels and cokernels.
Let finally $\ph:\CF\to\CG$ have zero cokernel.
Then each stalk $\ph_x$ has zero cokernel, hence is epic.
It follows that $\ph$ is epic and so $\Mod_\full(X)$ is belian.

To see that it has enough injectives, it suffices to consider products of skyscraper sheaves with injective stalks as in \cite{belian}.
\end{proof}

In order to define sheaf cohomology for congruence schemes along the lines of \cite{belian}, we need to show that there exists an ascent datum for the global sections functor
$$
\Ga:\Mod_\full(X)\to\Mod_\full\(\CO_X(X)\).
$$
For this we have to recall some definitions from the paper \cite{belian}.
\begin{itemize}
\item Recall that an \e{ascent functor} $\Asc$ on a belian category $\CB$ is a pointed functor from $\CB$ to some abelian category $\CC$ which is strong-exact and maps epimorphisms to epimorphisms.
\item For a belian category $\CB$, an \e{injective class} is a class $\CI$ of injective objects such that every object in $\CB$ injects into some object of $\CI$ and $\CI$ is closed under finite products.
\item Let $F:\CB\to\CB'$ be a left strong-exact functor of belian categories. An \e{ascent datum} for $F$ is a quadruple $(\CI,\Asc,\Asc',\tilde F)$ consisting of an injective class $\CI$ in $\CB$ and an ascent functor $\Asc:\CB\to \CC$ which maps objects of $\CI$ to injective objects as well an an ascent functor $\Asc'\CB'\to\CC'$ such that on the full subcategory with object class $\CI$ the diagram of functors
$$
\xymatrix{
\CC\ar[r]^{\tilde F}&\CC'\\
\CI\ar[u]^{\Asc}\ar[r]^F&\CB'\ar[u]_{\Asc'}
}
$$
commutes up to isomorphism of functors.
\end{itemize}

Let now $X$ be a congruence scheme and let $\CR$ be the sheaf of rings such that for each open $U\subset X$ the ring $\CR(U)$ is the universal ring of $\CO_X(U)$.
The functor $\Asc:\Mod_\full(X)\to \Mod(\CR)$ mapping a sheaf $\CF$ to the corresponding sheaf $M_\CF$ of $\CR$-modules is clearly an ascent functor.

We define the ascent functor $\Asc':\Mod_\full\(\CO_X(X)\)\to\Mod\(\CR(X)\)$ by mapping $(S,M_S)$ to $M_S$.
Finally we let $\tilde F:\Mod(\CR)\to\Mod\(\CR(X)\)$ be the global sections functor and we let $\CI$ be the class of products of skyscraper sheaves with injective stalks.

\begin{lemma}
The quadruple $(\CI,\Asc,\Asc',\tilde F)$ is an ascent datum for the global sections functor $\Mod_\full(X)\to\Mod_\full\(\CO_X(X)\)$.
\end{lemma}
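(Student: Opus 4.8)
The plan is to check, in turn, the three conditions that constitute an ascent datum for $\Ga$ in the sense recalled above, reducing each assertion to a statement about stalks, where the module theory of Sections 1--3 applies. As a preliminary, one notes that $\Ga$ is left strong-exact: sections functors commute with kernels and with products, and a strong exact sequence of module sheaves is strong exact on every stalk, so Lemma \ref{lem3.1}(a) together with the stalkwise nature of exactness gives the claim. That $\CI$ is an injective class is essentially the content of the proof that $\Mod_\full(X)$ has enough injectives: every module sheaf embeds into a product of skyscraper sheaves with injective stalks, such products are injective, and a finite product of objects of $\CI$ is again a product of skyscraper sheaves with injective stalks, so $\CI$ is closed under finite products.

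Next I would verify that $\Asc$ and $\Asc'$ are ascent functors and that $\Asc$ carries $\CI$ into injectives. Both functors are evidently pointed. For strong-exactness, a strong exact sequence $\CF\to\CG\to\CH$ is strong exact on each stalk, so by Lemma \ref{lem3.1}(a) the induced sequence $M_{\CF_x}\to M_{\CG_x}\to M_{\CH_x}$ of $\CR_x$-modules is exact for every $x$; since exactness in $\Mod(\CR)$ is tested on stalks, $\Asc$ is strong-exact, and the same argument applied directly to module maps handles $\Asc'$. That each functor sends epimorphisms to epimorphisms follows from Proposition \ref{lem1.3}(c): an epimorphism induces a surjection of universal ring modules (stalkwise for $\Asc$), which is an epimorphism in the target. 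Finally, if $\CF$ is a skyscraper sheaf concentrated at $x$ with injective stalk $(I,M_I)$, then Lemma \ref{lem1.9} gives $I=M_I$ with $M_I$ injective over $\CR_x$, so $\Asc(\CF)$ is a skyscraper sheaf of $\CR$-modules with injective stalk and is therefore injective; as products of injectives are injective in $\Mod(\CR)$, $\Asc$ carries all of $\CI$ into injectives.

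The main point, and the step I expect to carry the only genuine difficulty, is the commutativity of the square on $\CI$. Here it is essential that one works over $\CI$ and not over arbitrary injectives, because the functor $S\mapsto M_S$ does not commute with infinite products in general: $\prod_\alpha S_\alpha$ need not generate $\prod_\alpha M_{S_\alpha}$. This failure is repaired precisely by the injectivity of the stalks. Indeed, for $\CF=\prod_\alpha\CG_\alpha\in\CI$ with each $\CG_\alpha$ a skyscraper concentrated at $x_\alpha$ with injective stalk $(I_\alpha,M_{I_\alpha})$, Lemma \ref{lem1.9} forces $I_\alpha=M_{I_\alpha}$, so for every open $U$ the section module $\CF(U)=\prod_{x_\alpha\in U}I_\alpha$ already satisfies $S_{\CF(U)}=M_{\CF(U)}$; thus $\CF$ and $\Asc(\CF)=M_\CF$ have the same sections. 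Consequently the composite $\tilde F\circ\Asc$ sends $\CF$ to $M_\CF(X)=M_{\CF(X)}$, while the composite $\Asc'\circ\Ga$ sends it to $M_{\CF(X)}$, and the two coincide canonically; naturality of this identification is immediate. The remaining items being routine stalkwise reductions, this establishes that $(\CI,\Asc,\Asc',\tilde F)$ is an ascent datum.
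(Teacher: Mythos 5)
Your proof is correct and follows essentially the same route as the paper: the paper's (very terse) proof consists precisely of citing Lemma \ref{lem1.9} to see that $\Asc$ carries $\CI$ into injectives, and of asserting that the diagram commutes strictly (not merely up to isomorphism) on $\CI$. Your additional verifications --- left strong-exactness of $\Ga$, the ascent-functor axioms for $\Asc$ and $\Asc'$, and the explicit observation that $I_\alpha=M_{I_\alpha}$ makes the sections of objects of $\CI$ already full so that the square commutes on the nose --- are exactly the details the paper leaves implicit, with the last point being the correct explanation of why the restriction to $\CI$ matters.
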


\begin{proof}
By Lemma 1.9, the ascent functor sends $\CI$ to injectives.
The diagram commutes even without applying an isomorphic of functors.
\end{proof}

Now the results of Section 10 and 11 of \cite{belian} apply to modules of congruence schemes.
So to any $\CO_X$-module $\CF$ we can attach cohomology modules $H^p(X,\CF)$ which are modules under the sesquiad $\CO_X(X)$.

\begin{itemize}
\item Injective sheaves are flabby. Flabby sheaves are acyclic. The cohomology can be computed with arbitrary acyvlic resolutions.
\item If $X$ is noetherian of dimension $n$, then any cohomology in degrees $>n$ vanishes.
\item There is a natural base change of modules $\CF\mapsto\CF_\Z$ which attaches to an $X$-module a $X_\Z$-module and a base change of cohomology such that there is a natural injection
$$
H^p(X,\CF)_\Z\hookrightarrow H^p(X_\Z,\CF_\Z).
$$
\end{itemize}

\begin{bibdiv} \begin{biblist}

\bib{Bourb}{book}{
   author={Bourbaki, N.},
   title={\'El\'ements de math\'ematique. Fascicule XXVII. Alg\`ebre
   commutative. Chapitre 1: Modules plats. Chapitre 2: Localisation},
   language={French},
   series={Actualit\'es Scientifiques et Industrielles, No. 1290},
   publisher={Herman, Paris},
   date={1961},
   pages={187},
   review={\MR{0217051 (36 \#146)}},
}

\bib{Chu}{article}{
   author={Chu, Chenghao},
   author={Lorscheid, Oliver},
   author={Santhanam, Rekha},
   title={Sheaves and $K$-theory for $\Bbb F_1$-schemes},
   journal={Adv. Math.},
   volume={229},
   date={2012},
   number={4},
   pages={2239--2286},
   issn={0001-8708},
   doi={10.1016/j.aim.2011.12.023},
}

\bib{CC1}{article}{
   author={Connes, Alain},
   author={Consani, Caterina},
   author={Marcolli, Matilde},
   title={Fun with $\Bbb F_1$},
   journal={J. Number Theory},
   volume={129},
   date={2009},
   number={6},
   pages={1532--1561},
   issn={0022-314X},
   doi={10.1016/j.jnt.2008.08.007},
}

\bib{CC2}{article}{
   author={Connes, Alain},
   author={Consani, Caterina},
   title={Schemes over $\Bbb F_1$ and zeta functions},
   journal={Compos. Math.},
   volume={146},
   date={2010},
   number={6},
   pages={1383--1415},
   issn={0010-437X},
   doi={10.1112/S0010437X09004692},
}

\bib{CC3}{article}{
   author={Connes, Alain},
   author={Consani, Caterina},
   title={From monoids to hyperstructures: in search of an absolute
   arithmetic},
   conference={
      title={Casimir force, Casimir operators and the Riemann hypothesis},
   },
   book={
      publisher={Walter de Gruyter, Berlin},
   },
   date={2010},
   pages={147--198},
}

\bib{CC4}{article}{
   author={Connes, Alain},
   author={Consani, Caterina},
   title={On the notion of geometry over $\Bbb F_1$},
   journal={J. Algebraic Geom.},
   volume={20},
   date={2011},
   number={3},
   pages={525--557},
   issn={1056-3911},
   doi={10.1090/S1056-3911-2010-00535-8},
}

\bib{CC5}{article}{
   author={Connes, Alain},
   author={Consani, Caterina},
   title={Characteristic 1, entropy and the absolute point},
   conference={
      title={Noncommutative geometry, arithmetic, and related topics},
   },
   book={
      publisher={Johns Hopkins Univ. Press},
      place={Baltimore, MD},
   },
   date={2011},
   pages={75--139},
}

\bib{F1}{article}{
   author={Deitmar, Anton},
   title={Schemes over $\Bbb F_1$},
   conference={
      title={Number fields and function fields---two parallel worlds},
   },
   book={
      series={Progr. Math.},
      volume={239},
      publisher={Birkh\"auser Boston},
      place={Boston, MA},
   },
   date={2005},
   pages={87--100},
   doi={10.1007/0-8176-4447-4 6},
}

\bib{F1-2}{article}{
   author={Deitmar, Anton},
   title={Remarks on zeta functions and $K$-theory over ${\bf F}_1$},
   journal={Proc. Japan Acad. Ser. A Math. Sci.},
   volume={82},
   date={2006},
   number={8},
   pages={141--146},
   issn={0386-2194},
}

\bib{belian}{article}{
   author={Deitmar, Anton},
   title={Belian categories},
   journal={Far East Journal of Mathematical Sciences},
   volume={70},
   pages={1-46},
   date={2012}
}

\bib{congruence}{article}{
   author={Deitmar, Anton},
   title={Congruence schemes},
   journal={Internat. J. Math.},
   volume={24},
   date={2013},
   number={2},
   pages={pp 46},
   issn={0129-167X},
   doi={10.1142/S0129167X13500092},
}

\bib{Haran}{article}{
   author={Haran, M. J. Shai},
   title={Non-additive geometry},
   journal={Compos. Math.},
   volume={143},
   date={2007},
   number={3},
   pages={618--688},
   issn={0010-437X},
}

\bib{Haran2}{article}{
   author={Haran, Shai M. J.},
   title={Invitation to nonadditive arithmetical geometry},
   conference={
      title={Casimir force, Casimir operators and the Riemann hypothesis},
   },
   book={
      publisher={Walter de Gruyter, Berlin},
   },
   date={2010},
   pages={249--265},
   review={\MR{2777720 (2012g:11116)}},
}

\bib{Harts}{book}{
   author={Hartshorne, Robin},
   title={Algebraic geometry},
   note={Graduate Texts in Mathematics, No. 52},
   publisher={Springer-Verlag},
   place={New York},
   date={1977},
   pages={xvi+496},
   isbn={0-387-90244-9},
}

\bib{Kato}{article}{
   author={Kato, Kazuya},
   title={Toric singularities},
   journal={Amer. J. Math.},
   volume={116},
   date={1994},
   number={5},
   pages={1073--1099},
   issn={0002-9327},
   doi={10.2307/2374941},
}

\bib{KOW}{article}{
   author={Kurokawa, Nobushige},
   author={Ochiai, Hiroyuki},
   author={Wakayama, Masato},
   title={Absolute derivations and zeta functions},
   note={Kazuya Kato's fiftieth birthday},
   journal={Doc. Math.},
   date={2003},
   number={Extra Vol.},
   pages={565--584 (electronic)},
   issn={1431-0635},
}

\bib{Lor1}{article}{
   author={Lorscheid, Oliver},
   title={Functional equations for zeta functions of $\Bbb F_1$-schemes},
   language={English, with English and French summaries},
   journal={C. R. Math. Acad. Sci. Paris},
   volume={348},
   date={2010},
   number={21-22},
   pages={1143--1146},
   issn={1631-073X},
   doi={10.1016/j.crma.2010.10.010},
}

\bib{Lor2}{article}{
   author={L{\'o}pez Pe{\~n}a, Javier},
   author={Lorscheid, Oliver},
   title={Torified varieties and their geometries over $\Bbb F_1$},
   journal={Math. Z.},
   volume={267},
   date={2011},
   number={3-4},
   pages={605--643},
   issn={0025-5874},
   doi={10.1007/s00209-009-0638-0},
}

\bib{Lor3}{article}{
   author={L{\'o}pez Pe{\~n}a, Javier},
   author={Lorscheid, Oliver},
   title={Mapping $\Bbb F_1$-land: an overview of geometries over the
   field with one element},
   conference={
      title={Noncommutative geometry, arithmetic, and related topics},
   },
   book={
      publisher={Johns Hopkins Univ. Press},
      place={Baltimore, MD},
   },
   date={2011},
   pages={241--265},
}

\bib{Lor4}{article}{
   author={Lorscheid, Oliver},
   title={The geometry of blueprints: Part I: Algebraic background and
   scheme theory},
   journal={Adv. Math.},
   volume={229},
   date={2012},
   number={3},
   pages={1804--1846},
   issn={0001-8708},
   doi={10.1016/j.aim.2011.12.018},
}

\bib{Lor5}{article}{
   author={Lorscheid, Oliver},
   title={Algebraic groups over the field with one element},
   journal={Math. Z.},
   volume={271},
   date={2012},
   number={1-2},
   pages={117--138},
   issn={0025-5874},
   doi={10.1007/s00209-011-0855-1},
}

\bib{Lor6}{article}{
   author={L{\'o}pez Pe{\~n}a, Javier},
   author={Lorscheid, Oliver},
   title={Projective geometry for blueprints},
   language={English, with English and French summaries},
   journal={C. R. Math. Acad. Sci. Paris},
   volume={350},
   date={2012},
   number={9-10},
   pages={455--458},
   issn={1631-073X},
   doi={10.1016/j.crma.2012.05.001},
}

\bib{Soule}{article}{
   author={Soul{\'e}, Christophe},
   title={Les vari\'et\'es sur le corps \`a un \'el\'ement},
   language={French, with English and Russian summaries},
   journal={Mosc. Math. J.},
   volume={4},
   date={2004},
   number={1},
   pages={217--244, 312},
   issn={1609-3321},
   review={\MR{2074990 (2005h:14002)}},
}

\bib{Toen}{article}{
   author={To{\"e}n, Bertrand},
   author={Vaqui{\'e}, Michel},
   title={Au-dessous de ${\rm Spec}\,\Bbb Z$},
   language={French, with English and French summaries},
   journal={J. K-Theory},
   volume={3},
   date={2009},
   number={3},
   pages={437--500},
   issn={1865-2433},
   doi={10.1017/is008004027jkt048},
}

\bib{Vezzani}{article}{
   author={Vezzani, Alberto},
   title={Deitmar's versus To\"en-Vaqui\'e's schemes over $\Bbb{F}_1$},
   journal={Math. Z.},
   volume={271},
   date={2012},
   number={3-4},
   pages={911--926},
   issn={0025-5874},
   doi={10.1007/s00209-011-0896-5},
}

\end{biblist} \end{bibdiv}

{\small Mathematisches Institut,
Auf der Morgenstelle 10,
72076 T\"ubingen,
Germany\\
\tt deitmar@uni-tuebingen.de}

\end{document}